\newtheorem{theorem}{Theorem}
\newtheorem{remark}{Remark}
\begin{document}

\title{Implicit-Explicit Scheme with Multiscale Vanka Two-Grid Solver for Heterogeneous Unsaturated Poroelasticity}

\author{
Maria Vasilyeva 
\thanks{Department of Mathematics and Statistics, Texas A\&M University, Corpus Christi, Texas, USA. 
Email: {\tt maria.vasilyeva@tamucc.edu}.}
\and 
Ben S. Southworth  
\thanks{Theoretical Division, Los Alamos National Laboratory, NM, USA. Email: {\tt southworth@lanl.gov}.}
\and 
Yunhui He 
\thanks{Department of Mathematics, University of Houston, Houston, Texas, USA. 
Email: {\tt yhe43@central.uh.edu}.}
\and 
Min Wang 
\thanks{Department of Mathematics, University of Houston, Houston, Texas, USA. 
Email: {\tt mwang55@Central.uh.edu}.}
}

\maketitle

\begin{abstract}
We consider a coupled nonlinear system of equations that describe unsaturated flow in heterogeneous poroelastic media. For the numerical solution, we use a finite element approximation in space and present an efficient multiscale two-grid solver for solving the coupled system of equations. The proposed two-grid solver contains two main parts: (i) accurate coarse grid approximation based on local spectral spaces and (ii) coupled smoothing iterations based on an overlapping multiscale Vanka method. A Vanka smoother and local spectral coarse grids come with significant computational cost in the setup phase. To avoid constructing a new solver for each time step and/or nonlinear iteration, we utilize an implicit-explicit integration scheme in time, where we partition the nonlinear operator as a sum of linear and nonlinear parts. In particular, we construct an implicit linear approximation of the stiff components that remains fixed across all time, while treating the remaining nonlinear residual explicitly. This allows us to construct a robust two-grid solver offline and utilize it for fast and efficient online time integration. A linear stability analysis of the proposed novel coupled scheme is presented based on the representation of the system as a two-step scheme. We show that the careful decomposition of linear and nonlinear parts guarantees a linearly stable scheme. A numerical study is presented for a  nonlinear coupled test problem of unsaturated flow in heterogeneous poroelastic media. We demonstrate the robustness of the two-grid solver, particularly the efficacy of block smoothing compared with simple pointwise smoothing, and illustrate the accuracy and stability of implicit-explicit time integration.
\end{abstract}

\section{Introduction}

Accurately simulating unsaturated flow is essential in hydrology, soil science, reservoir simulation, and environmental engineering. It describes water movement through porous media where both air and water coexist, and is governed by nonlinear partial differential equations (PDEs) combining Darcy’s law and the continuity equation \cite{celia1990general}. The model includes highly nonlinear relationships between moisture content, pressure, and hydraulic conductivity, often described by empirical functions such as the van Genuchten or Brooks–Corey models \cite{van1980closed, brooks1965hydraulic}.  
Coupling mechanics with flow is crucial, as changes in water content affect both permeability and the mechanical properties of the porous matrix, leading to complex interactions \cite{uzuoka2012dynamics, carmeliet2013nonlinear}. 
Numerical modeling of such coupled processes in heterogeneous media is challenging due to strong nonlinearity, multiphysics interactions, and scale dependencies. Advances in numerical methods can directly enhance the accuracy and efficiency of simulations in geotechnical engineering, reservoir modeling, and environmental hydrology \cite{varela2021finite}.
Classical numerical schemes for solving time-dependent problems are based on explicit or implicit time approximation. 
Explicit methods are straightforward in implementation, but can require very small time steps for stability, making them impractical for problems with heterogeneous properties.
Implicit schemes enhance stability for stiff equations, but require {computationally} expensive linear and nonlinear solves. 
An alternative approach involves combining two methods using an Implicit-Explicit (ImEx) time-stepping scheme, which applies an explicit scheme to non-stiff components and an implicit scheme to stiff components that require improved stability  \cite{ascher1995implicit, vabishchevich2013additive}. 
However, even in ImEx schemes, the bulk of computational cost is in solving the implicit equations. 

Recent studies have demonstrated the effectiveness of various multigrid methods on poroelastic models, e.g., \cite{adler2022monolithic,luo2015multigrid}. For challenging problems, robust multigrid methods can require more advanced smoothers than simple pointwise methods such as Jacobi or Gauss-Seidel. This has led to renewed interest in block- or patch-based smoothers, e.g. \cite{farrell2021pcpatch}, along the lines of Schwarz or Vanka methods \cite{vanka1986block}, and related to (spectral) domain decomposition methods \cite{Spillane.2014}. Such methods can be overlapping or nonoverlapping with respect to patches or ``subdomains'' that are locally solved, and additive or multiplicative in nature. Multicolor smoothers, e.g., \cite{kawai2020multiplicative,wang2025multicolor},
combine ideas from additive and multiplicative methods are often used for improved parallelization over multiplicative methods and improved performance over additive methods.

In this work, we {develop both a time-discretization strategy and corresponding two-level solver framework} for unsaturated flow in heterogeneous poroelastic media. Our main contributions are:  
\begin{enumerate}
\item We consider nonlinear poroelastic systems with high spatial variations in conductivity and elastic properties, as well as sharp wetting fronts, requiring fine-scale resolution.
\item We develop a \textit{novel implicit-explicit null (EIN) scheme}, splitting the operator into a linear approximation fixed for all time (treated implicitly) and a nonlinear residual (treated explicitly). It is proven that the proposed splitting ensures unconditional linear stability, and the reuse of the linear implicit component across all time is key to constructing an efficient linear solver.
\item We propose a \textit{{new and} efficient multiscale two-grid solver}, consisting of:
\begin{itemize}
    \item a coarse-grid solver based on \textit{local spectral enrichment} for flow and mechanics;
    \item a \textit{new coupled multiscale Vanka smoother} with overlapping subdomains defined by multiscale basis supports, coarse cells, or extended coarse-cell neighborhoods, and multicolor additive implementations.
\end{itemize}
\item We demonstrate that, despite the additional cost of computing multiscale basis functions and Vanka smoothers, these {components} can be precomputed \emph{offline} and reused, yielding highly efficient simulations for nonlinear unsaturated poroelastic problems.
\end{enumerate}

The paper is organized as follows. In Section \ref{sec:model}, we describe the continuous PDE problem formulation for unsaturated flow in poroelastic media. 
Section \ref{sec:dis} proceeds to introduce a finite element approximation in space, and Section \ref{sec:imex} proposes a new implicit-explicit time integration scheme. We then {provide} a stability analysis {demonstrating that the proposed additive partitioning of the operator leads to an unconditionally stable scheme.} 
In Section \ref{sec:prec}, we propose the construction of a multiscale two-grid solver based on a multiscale Vanka approach, and describe the construction of local spectral multiscale basis functions for the flow and mechanics parts of the system.  
The performance of our proposed two-grid solver with local smoothing iterations for the nonlinear poroelastic problem in heterogeneous media is demonstrated in Section \ref{sec:num}. Conclusions are discussed in Section \ref{sec:con}.

\section{Problem formulation}\label{sec:model}

We consider the unsaturated poroelasticity model that describes the coupled behavior of fluid flow and mechanical deformation in partially saturated porous media. Specifically, we utilize the model from \cite{kim2010sequential,varela2018implementation, varela2021finite} with an additional nonlinearity to account for elastic properties \cite{lu2014power}. 

\subsection{Unsaturated flow}

The Richards equation given below describes an unsaturated flow in porous media, which combines Darcy’s law for variably saturated flow with the mass conservation equation \cite{celia1990general}
\begin{equation}
\label{eq:unsat}    
c_w(x, h_w)\frac{\partial h_w}{\partial t} - \nabla \cdot (k_w(x,h_w) ( \nabla h_w + \nabla \zeta)) = f_w,
\end{equation}
where $\theta$ is the water content, 
$h_w$ is the pressure head, 
$\zeta$ is the elevation head, 
$f_w$ is a fluid source/sink term and 
$k_w$ is the hydraulic conductivity, and $c_w(x, h_w) = \partial \theta / \partial h_w$. 

The nonlinear conductivity coefficient and water content depend on the pressure head $h_w$. The shape of water retention curves can be characterized by the van Genuchten model \cite{van1980closed}:
\begin{equation}
\theta(h_w) = \theta_r + (\theta_s - \theta_r) \left[ 1 + (\beta |h_w|)^{n_{\theta}} \right]^{-m_{\theta}}, \quad h_w < 0,
\label{eq:vg1}
\end{equation}
where $\theta_s$ is the saturated water content, $\theta_r$ is the residual water content, $\beta$ is an empirical parameter related to the inverse of the air entry pressure, and $n_{\theta}$ and $m_{\theta}$ are fitting parameters with $m_{\theta} = 1 - \frac{1}{n_{\theta}}$. The unsaturated hydraulic conductivity function is defined as
\begin{equation}
k_w(x, h_w) = k_s(x) (S_e(h_w))^{\eta} \left[ 1 - \Big(1 - \big(S_e(h_w)\big)^{1/m_{\theta}} \Big)^{m_{\theta}} \right]^2, 
\label{eq:vg2}
\end{equation}
where $k_s$ is the saturated hydraulic conductivity,
$S_e(h_w) = \frac{\theta(h_w) - \theta_r}{\theta_s - \theta_r}$ is the effective saturation, and 
$\eta$ is a pore connectivity parameter.

\subsection{Mechanics}
For the mechanics, we consider the following momentum balance for the porous medium’s displacement under the quasi-static and  small deformations assumptions 
\cite{kim2010sequential, varela2018implementation, varela2021finite}
\begin{equation}\label{eq:elas}
-\nabla \cdot \sigma_T = \rho_b g, 
\end{equation}
where  $$\sigma_T=\sigma - \alpha S p \mathcal{I} ,\quad \sigma(u) =  2 \mu \varepsilon (u) + \lambda (\nabla \cdot u) \mathcal{I}, \quad \varepsilon(u)=0.5 (\nabla u + (\nabla u)^T).$$
Here, $\sigma_T$ is the total stress tensor,
$\sigma$ is the stress tensor, 
$p$ is the water pressure, 
$S$ is the water saturation, 
$\alpha$ is the Biot coefficient, 
$u$ is the displacement vector,
$\varepsilon$ is the strain tensor,
$\rho_b = \big(\phi S \rho_w + (1-\phi)  \rho_s \big)$ is the bulk density,  
$g$ is the gravity vector, 
$\phi$ is the porosity, 
$\rho_w$ is the water density, 
$\rho_s$ is the solid phase density, 
$\mathcal{I}$ is the identity tensor, 
and $\lambda$ and $\mu$ are the Lame's coefficients. 

Additionally, we consider nonlinear elastic properties that describe the effect of changes in water content on the mechanical properties  \cite{lu2014power}
\[
\lambda(x, h_w) = \frac{E(x, h_w) \nu}{(1+\nu)(1-2\nu)},\quad 
\mu(x, h_w) = \frac{E(x, h_w)}{2(1+\nu)},
\]
where $\nu$ is the Poisson’s ratio and  $E(x,h_w)$ is the Young’s modulus given by
\begin{equation}
E(x, h_w) = E_d(x) + (E_w(x) - E_d(x)) (S_e(h_w))^{\zeta}
=  E_d(x) \big(1 + (r - 1) (S_e(h_w))^{\zeta} \big),
\label{eq:vg3}
\end{equation}
where $r_E = E_w(x)/E_d(x)$. 
The elastic properties of the porous matrix are water-dependent, meaning that soils and rocks tend to soften as they absorb water, leading to increased deformations. Here $E_d$ and $E_w$ are Young’s modulus of the dry and wet states, respectively, and $\zeta$ is the empirical fitting parameter \cite{lu2014power}. 

We note that the relationships \eqref{eq:vg1}-\eqref{eq:vg2} are expressed in terms of water content and pressure, therefore we adapt the original van Genuchten model to water pressure and saturation representation by using the following relations
\[
S = \theta / \phi, \quad 
p = \rho_w g h_w.
\]
Here, instead of formulation \eqref{eq:unsat} for $(\theta, h_w)$, we write a mass conservation equation in terms of $(S, p)$ \cite{kim2010sequential,varela2018implementation, varela2021finite}.  A full derivation from physical principles can be found in the Appendix \ref{app1}.

\subsection{Problem setting and assumptions}

We let $\Omega \subset \mathcal{R}^d$ be a computational domain and consider the following coupled nonlinear poroelasticity system in $\Omega$
\begin{equation}
\begin{split}
& c(x, p) \frac{\partial p}{\partial t} 
+ \alpha S(p) \frac{\partial (\nabla \cdot u)}{\partial t} 
- \nabla \cdot \big( \kappa(x, p) \nabla p \big)
= \tilde{f}, 
\quad x \in \Omega, \quad t > 0, 
\\
& -\nabla \cdot \sigma(u)
+ \alpha \nabla \big(S(p) p \big) = \rho_b g, \quad \quad x \in \Omega.
\end{split}
\label{eq:mm3}
\end{equation}
with $\tilde{f} = f + \nabla \cdot \big( \kappa(x, p) \nabla (\rho_w g) \big)$ and 
\[
c(x, p) =  \big( \phi C_w  + (\alpha -\phi) C_s S \big) S + \big(\phi + (\alpha -\phi) C_s  S p \big) S', \quad 
\kappa(x, p) = k_s(x) \frac{k_{rw}(S)}{\mu_w}.
\]
Here $k$ is the intrinsic permeability tensor, $\mu_w$ is the water dynamic viscosity, and $k_{rw}$ is the relative permeability, $C_w$ is the water compressibility, $C_s$ is the compressibility of the solid grains, $S = S(p)$ and $S' = \partial S/\partial p$.

We supplement \eqref{eq:mm3} with the following initial conditions
\begin{equation}
\label{eq:mm4}
p = p^0, \quad u = u^0, \quad x \in \Omega, \quad t = 0, 
\end{equation}
and boundary conditions
\begin{equation}
\begin{split}
&-\kappa \nabla p \cdot n  
= \gamma (p - p_1), 
\quad  x \in \Gamma_p,  \quad 
-\kappa \nabla p \cdot n  
= 0, 
\quad  x \in \partial \Omega / \Gamma_p, \quad t > 0, 
\\
&u = 0, \quad x \in \Gamma_u,\quad 
\sigma \cdot n  
= 0 , \quad x \in \partial \Omega / \Gamma_u, \quad t > 0, 
\end{split}
\label{eq:mm5}
\end{equation}
where $p_1$ is the given pressure on the boundary $\Gamma_p$ and $\gamma$ is the exchange coefficient in Robin-type boundary condition.

We assume the following:
\begin{itemize}
\item $k_s(x)$ and $E_d(x)$ are the intrinsic permeability tensor and drained Young's modulus, respectively, which may vary spatially.  
\item $\mu_w$ is the constant water dynamic viscosity, $k_{rw}(S)$ is the relative permeability, and $S(p)$ is the saturation function, modeled using the nonlinear van Genuchten relation.  
\item $C_w$ is the constant water compressibility, and $C_s$ is the constant solid grain compressibility.
\item $\phi$ is the constant porosity, $\alpha$ is the constant Biot coefficient, $\rho_b$ is the constant bulk density, and $g$ is the gravity vector.
\end{itemize}

For the analysis and numerical solution, we assume that:
\begin{itemize}
\item $c(x,p) > 0$ and $\kappa(x,p)$ is symmetric positive definite for all admissible $x$ and $p$.
\item $S(p)$ is smooth and monotone increasing, with $0 \le S(p) \le 1$.
\item $k_s(x)$ and $E_d(x)$ are bounded and strictly positive.  
\item The domain $\Omega$ is Lipschitz, and the boundary portions $\Gamma_p$ and $\Gamma_u$ have positive measure.
\end{itemize}
Discretizations in space and time are detailed next.

\section{Spatial discretization and semi-implicit time integration}\label{sec:dis}

Here we will detail a finite element method (FEM) discretization with linear basis functions for both pressure and displacement equations. In principle, an additional regularization can be included in the pressure equation to prevent oscillations and stabilize the discretization for certain parameter regimes \cite{rodrigo2016stability, adler2018robust, lee2017parameter, kim2011stability, kolesov2014splitting}, but here we focus on the standard case without additional regularization.

Let
\[
V = \lbrace v \in [H^1(\Omega)]^d:  v = 0 \text{ on } \Gamma_u \rbrace,  \quad 
Q =  H^1(\Omega).
\]
We define a variational formulation of unsaturated poroelasticity problem \eqref{eq:mm3}-\eqref{eq:mm5} as follows: 
find $(p, u) \in Q \times V$ such that 
\begin{equation}
\begin{split}
\int_{\Omega} \alpha S(p) \frac{\partial (\nabla \cdot u)}{\partial t} \ r \ dx
& + \int_{\Omega} c(x, p) \frac{\partial p}{\partial t} \ r \ dx
  + \int_{\Omega} \kappa(x,p) \nabla p \cdot \nabla r \ dx\\
&
+ \int_{\Gamma_p} \gamma \ p \ r \ ds  = \int_{\Omega} \tilde{f} \ r \ dx
 + \int_{\Gamma_p} \gamma \ p_1 \ r \ ds, 
\quad \forall r \in Q, 
\\
\int_{\Omega} \sigma(u) : \varepsilon(v) \, dx
&+  \int_{\Omega} \alpha \nabla \big( S(p) p \big) \cdot \ v \ dx 
  = \int_{\Omega} \rho_b \ g \ v \ dx , 
\quad \forall v \in V.
\end{split}
\label{eq:app1}
\end{equation}

We use a finite element method for spatial discretization. Let $\mathcal{T}^h$ be a partition of the domain $\Omega$, $\mathcal{E}^b$ be a set of all boundary interfaces and $\mathcal{E}^b_p$ be a subset of the boundary interfaces $\mathcal{E}^b_p = \mathcal{E}^b \cap \Gamma_p$. 
Consider the first-order backward Euler implicit scheme with constant time-step size $\tau$. 
Then we obtain the following discrete system in a matrix form for $(p^{n+1}_h, u^{n+1}_h) \in Q_h \times V_h$:
\begin{equation}
\frac{1}{\tau }
\begin{bmatrix}
  M_h^{n+1} & \alpha D_h^{n+1} \\
  0 & 0
\end{bmatrix} 
\begin{bmatrix}
  p^{n+1}_h - p^{n}_h \\
  u^{n+1}_h - u^{n}_h
\end{bmatrix} 
+ 
\begin{bmatrix}
  A_h^{n+1} & 0 \\
 \alpha G_h^{n+1} & K_h^{n+1}
\end{bmatrix} 
\begin{bmatrix}
  p^{n+1}_h \\
  u^{n+1}_h
\end{bmatrix} 
= 
\begin{bmatrix}
  F^{n+1}_{p, h} \\
  F^{n+1}_{u, h}
\end{bmatrix} ,
\label{eq:app2} 
\end{equation}
where $c^{n+1} = c(p^{n+1})$, $k^{n+1} = k(x,p^{n+1})$ and
\[ 
\begin{split}
&A_h^{n+1} = \left[a^{n+1}_{ij} = \int_{\Omega} \kappa^{n+1} \nabla \phi_i \cdot \nabla \phi_j \ dx\right], 
\\ &
K_h^{n+1} = \left[b^{n+1}_{ln} = 
\int_{\Omega} (
2 \mu^{n+1} \varepsilon(\Phi_l) : \varepsilon(\Phi_n) dx + 
\lambda^{n+1} \nabla \cdot \Phi_l \ \nabla \cdot \Phi_n
) dx\right],
\\ &
D_h^{n+1} = \left[d_{il}^{n+1} = \int_{\Omega} S(p^{n+1}) \ \phi_i \nabla \cdot \Phi_l   dx\right],  
\\ &
G_h^{n+1} = \left[g_{li}^{n+1} = \int_{\Omega}  \nabla \big( S(p^{n+1}) \phi_i \big) \cdot  \Phi_l \ dx \right],  
\\ &
F_{p, h}^{n+1} = \left[f^{n+1}_{p,i} = \int_{\Omega} \tilde{f}^{n+1} \ \phi_i \ dx + \int_{\Gamma_p} \gamma p_1  \phi_i \ ds \right], 
\\ &
F_{u, h}^{n+1} = \left[
f_{u,l}^{n+1} = \int_{\Omega} \rho_b^{n+1} \ g \ \Phi_l \ dx\right],  
\quad 
M^{n+1}_h = \left[m^{n+1}_{ij} = \int_{\Omega} c^{n+1} \phi_i \phi_j \ dx \right],  
\end{split}
\]
with
\[
u_h^{n+1} = \sum_{l = 1}^{ N_v} u_l^{n+1} \Phi_l, \quad p_h^{n+1} = \sum_{i = 1}^{N_v} p_i^{n+1} \phi_i,
\] 
where
$\{\Phi_l\}$ are the linear basis functions for displacements, $\{\phi_i\}$ are the linear basis functions for pressure, and $N_v$ is the number of vertices on the grid $\mathcal{T}_h$.

Each implicit time step requires solving a nonlinear system. To linearize, we will use a Picard iteration in the form
\begin{equation}
\begin{split}
\frac{1}{\tau }
\begin{bmatrix}
  M_h^{n+1, (m)} & \alpha D_h^{n+1, (m)} \\
  0 & 0
\end{bmatrix} 
&\begin{bmatrix}
  p^{n+1, (m+1)}_h - p^{n}_h \\
  u^{n+1, (m+1)}_h - u^{n}_h
\end{bmatrix} \\
& + 
\begin{bmatrix}
  A_h^{n+1, (m)} & 0 \\
  \alpha G_h^{n+1, (m)} & K_h^{n+1, (m)}  
\end{bmatrix} 
\begin{bmatrix}
  p^{n+1, (m+1)}_h \\
  u^{n+1, (m+1)}_h
\end{bmatrix} 
= 
\begin{bmatrix}
  F^{n+1, (m)}_{p, h} \\
  F^{n+1, (m)}_{u, h}
\end{bmatrix} ,
\end{split}
\label{eq:app3} 
\end{equation}
where $(m)$ denotes nonlinear iteration, and the process continues until a desired tolerance between successive iterates is obtained, $||p^{n+1, (m+1)}_h - p^{n+1, (m)}_h|| \leq \varkappa_p$ and $||u^{n+1, (m+1)}_h - u^{n+1, (m)}_h|| \leq \varkappa_u$, for tolerance $\varkappa_p$ and $\varkappa_u$.

Using an analogous linearization, we may also linearize \eqref{eq:mm5} on the level of the time integration scheme using the solution from the previous time step to evaluate certain nonlinear quantities: 
\begin{equation}
\frac{1}{\tau }
\begin{bmatrix}
  M_h^{n} & \alpha D_h^{n} \\
  0 & 0
\end{bmatrix} 
\begin{bmatrix}
  p^{n+1}_h - p^{n}_h \\
  u^{n+1}_h - u^{n}_h
\end{bmatrix} 
+ 
\begin{bmatrix}
  A_h^{n} & 0 \\
  \alpha G_h^{n} & K_h^{n}
\end{bmatrix} 
\begin{bmatrix}
  p^{n+1}_h \\
  u^{n+1}_h
\end{bmatrix} 
= 
\begin{bmatrix}
  F^{n}_{p, h} \\
  F^{n}_{u, h}
\end{bmatrix} .
\label{eq:app4} 
\end{equation}
Such an approach is a simple way to reduce computational costs by only solving a single linear system each time step, rather than resolving a fully nonlinear system, albeit typically at a cost of reduced stability and/or larger error constant. This approach balances stability, accuracy, and computational efficiency for the nonlinear poroelastic system, and belongs to the broader class of first-order semi-implicit integrators and nonlinear operator partitions \cite{buvoli2024new}.

\section{Implicit-explicit scheme for time approximation} \label{sec:imex}

To approximate the nonlinear flow problem in time, we now construct an additive EIN splitting, with implicit linear term that is fixed for all time. 
We follow our previous work \cite{kolesov2014splitting}, in which we developed a splitting scheme for the poroelasticity equations to construct stable solvers and decouple the displacement and pressure equations. While the present approach is still based on additive splitting, it is conceptually different. Here, we propose using additive splitting to derive an EIN scheme.

\subsection{Additive scheme}

In the poroelasticity system \eqref{eq:mm3}, the second equation does not contain a time derivative, and the system is in the form of differential algebraic equations. To construct an additive scheme and analyze the stability of the resulting linearized formulation, we begin by differentiating the displacement equation with respect to time:
\begin{equation}
\begin{split}
& c(x, p) \frac{\partial p}{\partial t} 
+ \alpha S(p) \frac{\partial (\nabla \cdot u)}{\partial t} 
- \nabla \cdot (\kappa(x, p) \nabla p )
= \tilde{f}, 
\quad x \in \Omega, \quad t > 0, 
\\
& - \frac{\partial (\nabla \cdot \sigma(u))}{\partial t} 
+ \alpha \frac{\partial \big( S(p) \nabla p \big) }{\partial t} 
= \frac{\partial (\rho_b g) }{\partial t}, 
\quad \quad x \in \Omega, \quad t > 0,
\end{split}
\label{eq:mm3b}
\end{equation}
with initial conditions $p=p^0$ and $u=u^0$ such that
\[
-\nabla \cdot \sigma(u^0) + \alpha \nabla p^0 = \rho_b g, \quad x \in \Omega, \quad t = 0.
\]
Then for a semi-discrete system, we have the following matrix form for $U = (p_h, u_h) \in Q_h \times V_h$:
\begin{equation}
\mathbb{C} \frac{\partial U}{\partial t} + \mathbb{L} U = F,
\label{eq:odeform}
\end{equation}
with 
\[
\mathbb{C} = 
\begin{bmatrix}
  M_h & \alpha D_h \\
  \alpha G_h & K_h
\end{bmatrix}, \quad 
\mathbb{L} = 
\begin{bmatrix}
  A_h & 0 \\
  0 & 0
\end{bmatrix}, \quad 
F = 
\begin{bmatrix}
  F_{p,h} \\
  \delta F_{u,h}
\end{bmatrix}.
\]
The resulting semi-discrete system can be viewed as an initial value problem for a system of ordinary differential equations  \cite{kolesov2014splitting, petzold1982differential}. 

The standard implicit approximation for the problem \eqref{eq:odeform} has the following form
for $U^{n+1} = (p^{n+1}_h, u^{n+1}_h)$
\[
\mathbb{C}^{n} \frac{U^{n+1} - U^n}{\tau} + \mathbb{L}^{n} U^{n+1} = F^n,
\]
where
\[
\mathbb{C}^{n} = 
\begin{bmatrix}
  M_h^{n+1} & \alpha D_h^{n+1} \\
  \alpha G_h^{n+1} & K_h^{n+1}
\end{bmatrix}, \quad 
\mathbb{L}^{n} = 
\begin{bmatrix}
  A_h^{n+1} & 0 \\
  0 & 0
\end{bmatrix}.
\]

To eliminate the need for reassembling the problem operator at each time step, we define the following additive representation of the problem operators
\[
\begin{split}
&\mathbb{C}^n = \mathbb{C}^{(lin)} +\mathbb{C}^{n, (nl)}
=
\begin{bmatrix}
  M_h^{(lin)} & \alpha D_h^{(lin)} \\
  \alpha G_h^{(lin)} & K_h^{(lin)}
\end{bmatrix}
+
\begin{bmatrix}
  M_h^{n, (nl)} & \alpha D_h^{n, (nl)} \\
  \alpha G_h^{n, (nl)} & K_h^{n, (nl)}
\end{bmatrix}, 
\\
&\mathbb{L}^n = \mathbb{L}^{(lin)} +\mathbb{L}^{n, (nl)}
=
\begin{bmatrix}
  A_h^{(lin)} & 0 \\
  0 & 0
\end{bmatrix} 
+
\begin{bmatrix}
  A_h^{n, (nl)} & 0 \\
  0 & 0
\end{bmatrix} .
\end{split}
\]
where $M^{(lin)}$, $D^{(lin)}$, $G^{(lin)}$, $A^{(lin)}$ and $K^{(lin)}$ are linear operators and $M^{n,(nl)}$, $D^{n,(nl)}$, $G^{n,(nl)}$, $A^{n,(nl)}$ and $K^{n,(nl)}$ are nonlinear residual operators, which depend on current solution $(p^n_h, u^n_h)$ and should be updated at each time step, i.e.,
\[
\begin{split}
&M^{n, (nl)}_h = M^n_h - M^{(lin)}_h, \quad 
D^{n, (nl)}_h = D^n_h - D^{(lin)}_h, \quad 
G^{n, (nl)}_h = G^n_h - G^{(lin)}_h,
\\ &
A^{n, (nl)}_h = A^n_h - A^{(lin)}_h,  \quad 
K^{n, (nl)}_h = K^n_h - K^{(lin)}_h.
\end{split}
\] 
Such additive representation separates the nonlinear parts, and by approximating it explicitly, we obtain a linear system without updating the operator at each time layer.

Then, the proposed implicit-explicit scheme can be written as a three-step scheme
\[
\underbrace{
    \mathbb{C}^{(lin)} \frac{U^{n+1} - U^n}{\tau} 
    + \mathbb{L}^{(lin)} U^{n+1}}_{\text{Linear/Implicit}}
+ \underbrace{
    \mathbb{C}^{n, (nl)} \frac{U^{n} - U^{n-1}}{\tau} 
    + \mathbb{L}^{n, (nl)} U^{n}}_{\text{Noninear/Explicit}} 
= F^n,
\]
or
\begin{equation}
\begin{split}
&\underbrace{
\begin{bmatrix}
  M_h^{(lin)} & \alpha D_h^{(lin)} \\
  \alpha G_h^{(lin)} & K_h^{(lin)}
\end{bmatrix} 
\begin{bmatrix}
  p^{n+1}_h - p^{n}_h \\
  u^{n+1}_h - u^{n}_h
\end{bmatrix} 
+ 
\begin{bmatrix}
  \tau A_h^{(lin)} & 0 \\
  0 & 0
\end{bmatrix} 
\begin{bmatrix}
  p^{n+1}_h \\
  u^{n+1}_h
\end{bmatrix}
}_{\text{Linear/Implicit}}
\\&
\quad \quad + 
\underbrace{
\begin{bmatrix}
  M_h^{n, (nl)} & \alpha D_h^{n, (nl)} \\
  \alpha G_h^{n, (nl)} & K_h^{n, (nl)}
\end{bmatrix} 
\begin{bmatrix}
  p^{n}_h - p^{n-1}_h \\
  u^{n}_h - u^{n-1}_h
\end{bmatrix} 
+ 
\begin{bmatrix}
  \tau A_h^{n, (nl)} & 0 \\
  0 & 0
\end{bmatrix} 
\begin{bmatrix}
  p^{n}_h \\
  u^{n}_h
\end{bmatrix}
}_{\text{Noninear/Explicit}}
\\&
\quad \quad = 
\begin{bmatrix}
  \tau F_{p,h}^n \\
  F_{u,h}^n - F_{u,h}^{n-1}
\end{bmatrix}.
\end{split}
\label{eq:imex1} 
\end{equation}
The main question in this implicit-explicit approximation is how to split the operators to ensure the unconditional stability of the scheme. {We will discuss this in the following subsections.}

\subsection{Stability analysis}

Traditionally, splitting schemes for the poroelasticity system have been developed and analyzed by separating the displacement and pressure calculations, allowing each variable to be solved independently. Common approaches include the undrained split, fixed-stress split, drained split, and fixed-strain split methods, whose stability properties have been studied extensively in the literature \cite{kim2011stability, mikelic2013convergence, kolesov2014splitting}. Our proposed approach differs from traditional splitting methods for poroelasticity and is specifically designed to address nonlinearities, where we treat part of the coupling implicitly to avoid these stability problems. 

We use a linear stability analysis of time-dependent PDEs and derive the unconditional stability of the proposed implicit-explicit scheme \cite{vabishchevich2013additive}.  The stability here does not imply nonlinear stability and should be interpreted within the framework of linearized analysis. Next, we show that the scheme is unconditionally stable provided that the explicitly treated residual is bounded by the implicitly treated approximation \cite{anitescu2004implicit, layton2013analysis, bukac2015analysis}. For notation, we let $A_h^{(nl)} \leq (1 - \varrho) A_h^{n, (lin)}$ imply that for $A_h^{(nl)}$ linearized about any state, the operator $(1-\varrho)A_h^{n, (lin)} - A_h^{(nl)}$ is symmetric positive definite (SPD).

\begin{theorem}
\label{t:t2}
Let nonlinear discretization operators be linearized about some valid state, and assume all linearized discretization operators and corresponding linear approximations, e.g. $A_h^{(nl)}$ and $A_h^{n, (lin)}$, are symmetric. Suppose 
\begin{equation}
\begin{split}
A_h^{(nl)} \leq (1 - \varrho) &A_h^{n, (lin)}, \quad 
K_h^{(nl)} \leq (1 - \varrho) K_h^{n, (lin)}, \quad 
M_h^{(nl)} \leq (1 - \varrho) M_h^{n, (lin)}\\
D_h^{(nl)} \leq (1 - \varrho) &D_h^{n, (lin)}, \quad 
G_h^{(nl)}  \leq (1 - \varrho) G_h^{n, (lin)}, 
\end{split}
\label{eq:st4}
\end{equation}
with $0<\varrho<1$. 
Then the solution of the discrete problem \eqref{eq:imex1} is stable. 
\end{theorem}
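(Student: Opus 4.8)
The plan is to rewrite the three-level (``two-step'') scheme \eqref{eq:imex1} in the canonical form used for the energy analysis of difference schemes and then extract a monotone discrete energy \cite{samarskii2001theory, samarskii2002difference, vabishchevich2013additive}. Since the statement permits linearizing the nonlinear residuals about a fixed state, I would first freeze $\mathbb{C}^{n,(nl)} = \mathbb{C}^{(nl)}$ and $\mathbb{L}^{n,(nl)} = \mathbb{L}^{(nl)}$, which turns \eqref{eq:imex1} into a constant-coefficient linear scheme; the forcing can be carried through at the end by a routine discrete Duhamel/Young estimate and does not affect the stability condition, so it is enough to treat the homogeneous case. Using $U^{n+1}-U^{n} = \tfrac12(U^{n+1}-U^{n-1}) + \tfrac12(U^{n+1}-2U^{n}+U^{n-1})$, the analogous identity for $U^{n}-U^{n-1}$, and $\mathbb{L}^{(lin)}U^{n+1} + \mathbb{L}^{(nl)}U^{n} = \mathbb{L}U^{n} + \mathbb{L}^{(lin)}(U^{n+1}-U^{n})$, a short computation recasts the homogeneous \eqref{eq:imex1} as
\[
\mathbb{B}\,\frac{U^{n+1}-U^{n-1}}{2\tau} + \mathbb{R}\,(U^{n+1}-2U^{n}+U^{n-1}) + \mathbb{L}\,U^{n} = 0,
\]
with $\mathbb{B} = \mathbb{C} + \tau\,\mathbb{L}^{(lin)}$ and $\mathbb{R} = \tfrac{1}{2\tau}(\mathbb{C}^{(lin)}-\mathbb{C}^{(nl)}) + \tfrac12\,\mathbb{L}^{(lin)}$, all symmetric under the hypotheses.

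Next I would test this identity against $U^{n+1}-U^{n-1} = (U^{n+1}-U^{n}) + (U^{n}-U^{n-1})$. The $\mathbb{B}$-term produces the nonnegative quantity $\tfrac{1}{2\tau}\|(U^{n+1}-U^{n})+(U^{n}-U^{n-1})\|_{\mathbb{B}}^{2}$; the $\mathbb{R}$-term telescopes (using $\mathbb{R}=\mathbb{R}^{\top}$) into $\|U^{n+1}-U^{n}\|_{\mathbb{R}}^{2}-\|U^{n}-U^{n-1}\|_{\mathbb{R}}^{2}$; and the term $\langle\mathbb{L}U^{n}, U^{n+1}-U^{n-1}\rangle$ telescopes (using $\mathbb{L}=\mathbb{L}^{\top}$) into $\Psi^{n}-\Psi^{n-1}$, where $\Psi^{n}=\langle\mathbb{L}U^{n+1},U^{n}\rangle = \tfrac14\|U^{n+1}+U^{n}\|_{\mathbb{L}}^{2}-\tfrac14\|U^{n+1}-U^{n}\|_{\mathbb{L}}^{2}$ by polarization. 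Grouping these yields a discrete energy
\[
\mathcal{E}^{n} = \|U^{n+1}-U^{n}\|_{\mathbb{R}-\frac14\mathbb{L}}^{2} + \tfrac14\|U^{n+1}+U^{n}\|_{\mathbb{L}}^{2},
\]
together with the identity $\tfrac{1}{2\tau}\|(U^{n+1}-U^{n})+(U^{n}-U^{n-1})\|_{\mathbb{B}}^{2} + \mathcal{E}^{n} - \mathcal{E}^{n-1} = 0$. Provided the three weights $\mathbb{B}$, $\mathbb{R}-\tfrac14\mathbb{L}$, and $\mathbb{L}$ are positive semidefinite, this gives $\mathcal{E}^{n}\le\mathcal{E}^{n-1}$ (with an added $\|\tilde F^{n}\|$-term in the nonhomogeneous case), i.e., stability of \eqref{eq:imex1}.

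It then remains to read off these three positivity conditions from \eqref{eq:st4}. We have $\mathbb{L}=\mathrm{diag}(A_h,0)\ge0$ since $A_h$ is a stiffness matrix; $\mathbb{B}=\mathbb{C}+\tau\mathbb{L}^{(lin)}\ge0$ since the symmetric part of $\mathbb{C}$ is built from the mass block $M_h$ and the elasticity block $K_h$ (the coupling blocks $D_h,G_h$ being skew up to boundary terms) and $\mathbb{L}^{(lin)}\ge0$. The decisive weight is $\mathbb{R}-\tfrac14\mathbb{L} = \tfrac{1}{2\tau}(\mathbb{C}^{(lin)}-\mathbb{C}^{(nl)}) + \tfrac14(\mathbb{L}^{(lin)}-\mathbb{L}^{(nl)})$: the hypothesis $A_h^{(nl)}\le(1-\varrho)A_h^{n,(lin)}$ gives $\mathbb{L}^{(lin)}-\mathbb{L}^{(nl)}\ge\varrho\,\mathbb{L}^{(lin)}\ge0$, and the remaining block bounds on $M_h,D_h,G_h,K_h$ are exactly what is needed to make $\mathbb{C}^{(lin)}-\mathbb{C}^{(nl)}\ge0$; crucially, both inequalities hold for every $\tau>0$, which is why the scheme is \emph{unconditionally} stable and no step-size restriction enters.

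The step I expect to be the main obstacle is obtaining $\mathbb{C}^{(lin)}-\mathbb{C}^{(nl)}\ge0$ — together with $\mathbb{C}^{(lin)}>0$, which is also implicitly needed so that the energy norm controls $\|U^{n}\|$ — as a genuine $2\times2$ block-operator inequality out of the \emph{block-wise} hypotheses in \eqref{eq:st4}: the diagonal bounds on $M_h$ and $K_h$ take care of the $(1,1)$ and $(2,2)$ blocks, but the off-diagonal coupling blocks $D_h,G_h$ must be absorbed, either through the skew relation $\alpha G_h = -(\alpha D_h)^{\top}$ from integration by parts (so the cross terms cancel in every quadratic form and only the $M_h,K_h$ bounds are used) or, under the stated full symmetry $G_h=D_h^{\top}$, through a Young/Schur-complement estimate that controls the coupling relative to $M_h$ and $K_h$. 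A secondary point, already disclaimed before the theorem, is that freezing the nonlinear residuals makes this a linearized-stability statement only.
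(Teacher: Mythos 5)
Your route and the paper's are related (both follow the Samarskii/Vabishchevich energy analysis for three-level schemes), but they are not the same, and the canonical form you chose does not close in the presence of the poroelastic coupling. You rewrite \eqref{eq:imex1} as $\mathbb{B}\,\tfrac{U^{n+1}-U^{n-1}}{2\tau} + \mathbb{R}\,(U^{n+1}-2U^{n}+U^{n-1}) + \mathbb{L}\,U^{n} = 0$ and test against $U^{n+1}-U^{n-1}=a+b$ with $a=U^{n+1}-U^{n}$, $b=U^{n}-U^{n-1}$. For the $\mathbb{R}$-term to telescope into $\|a\|_{\mathbb{R}}^{2}-\|b\|_{\mathbb{R}}^{2}$ you need $\mathbb{R}$ symmetric: the expansion actually reads $\langle\mathbb{R}(a-b),a+b\rangle=\|a\|_{\mathbb{R}}^{2}-\|b\|_{\mathbb{R}}^{2}+2\langle\mathbb{R}_{\mathrm{skew}}a,b\rangle$, and the last term survives whenever $\mathbb{R}\ne\mathbb{R}^{T}$. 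Here $\mathbb{R}=\tfrac{1}{2\tau}(\mathbb{C}^{(lin)}-\mathbb{C}^{(nl)})+\tfrac12\mathbb{L}^{(lin)}$, and the off-diagonal coupling blocks of $\mathbb{C}$ obey $D_h=-G_h^{T}$ (the sign the paper's proof itself uses), so the off-diagonal part of $\mathbb{R}$ is \emph{skew}, not symmetric. The leftover $2\langle\mathbb{R}_{\mathrm{skew}}a,b\rangle$ couples two different increments, has no definite sign, and is not controlled by the block hypotheses \eqref{eq:st4}. You identified a difficulty in the right neighborhood, but flagged it as ``obtaining $\mathbb{C}^{(lin)}-\mathbb{C}^{(nl)}\ge 0$,'' which is not the operative obstruction — even with that positivity granted, the cross term remains; and the patch ``under the stated full symmetry $G_h=D_h^{T}$'' has the wrong sign and would contradict the integration-by-parts structure of the discrete operators.

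The paper avoids this by not staying in the single-variable three-level form. It doubles the state to $Y=(v_1,w_1,v_2,w_2)$ with $v^{n}=\tfrac12(y^{n}+y^{n-1})$, $w^{n}=y^{n}-y^{n-1}$, recasts \eqref{eq:imex1} as a two-level scheme $\mathbb{B}\,\tfrac{Y^{n+1}-Y^{n}}{\tau}+\mathbb{A}Y^{n}=F$, and arranges the coupling into a block matrix $\mathbb{W}=\mathbb{B}-\tfrac{\tau}{2}\mathbb{A}$ whose skew off-diagonal blocks ($B_{12}=-B_{21}^{T}$, $Q_{12}=-Q_{21}^{T}$) pair up and cancel in the quadratic form, leaving $(\mathbb{W}Y,Y)=(B_{11}v_1,v_1)+(B_{22}v_2,v_2)$. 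That cancellation is exactly the replacement for the telescoping identity you were relying on: the doubling makes the skew coupling disappear identically, whereas in the single-variable canonical form it cannot. The rest of the estimate (splitting $\mathbb{W}=\mathbb{W}^{(0)}+\mathbb{W}^{(1)}$, $\mathbb{A}=\mathbb{A}^{(0)}+\mathbb{A}^{(1)}$, Cauchy--Schwarz/Young, and absorbing $\mathbb{W}^{(1)},\mathbb{A}^{(1)}$ using $\varrho$) parallels what you sketched, so the step you need to add is the doubled-variable reformulation and the skew-cancellation, not another positivity bound.

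Two smaller points. First, your reading of the coupling symmetry ($G_h=D_h^{T}$) contradicts the discrete integration by parts; this is not a notational quibble, because the sign is what makes $\mathbb{W}$ skew and its quadratic form diagonal. Second, a Young/Schur absorption of the cross term $2\langle\mathbb{R}_{\mathrm{skew}}a,b\rangle$ against the diagonal weights would not obviously be $\tau$-uniform, since $\mathbb{R}_{\mathrm{skew}}$ and the diagonal mass part of $\mathbb{R}$ both scale like $\tau^{-1}$ but the cross term compares $a$ and $b$ whereas the $\mathbb{B}$-term only controls $a+b$; this path would need a genuinely new estimate, and the paper avoids it entirely.
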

\begin{proof}
We use a general framework for the three-step scheme \eqref{eq:imex1} and represent blocks of the system as follows \cite{vabishchevich2013additive}
\[
\begin{split}
M_h^{(lin)} \frac{p^{n+1}_h - p^{n}_h}{\tau} 
&+ M_h^{n, (nl)} \frac{p^{n}_h - p^{n-1}_h}{\tau}
+ A_h^{(lin)} p^{n+1}_h 
+ A_h^{n, (nl)} p^{n}_h \\
& = B_{11} \frac{p^{n+1}_h - p^{n-1}_h}{2\tau} 
+ R_{11} (p^{n+1}_h - 2 p^{n}_h + p^{n-1}_h)
+ A_{11} p^{n}_h,
\\
K_h^{(lin)} \frac{u^{n+1}_h - u^{n}_h}{\tau} 
&+ K_h^{n, (nl)} \frac{u^{n}_h - u^{n-1}_h}{\tau}
= B_{22} \frac{u^{n+1}_h - u^{n-1}_h}{2\tau} 
+ R_{22} (u^{n+1}_h - 2 u^{n}_h + u^{n-1}_h),
\\
D_h^{(lin)} \frac{u^{n+1}_h - u^{n}_h}{\tau} 
& + D_h^{n, (nl)} \frac{u^{n}_h - u^{n-1}_h}{\tau}
= B_{12} \frac{u^{n+1}_h - u^{n-1}_h}{2\tau} 
+ R_{12} (u^{n+1}_h - 2 u^{n}_h + u^{n-1}_h),
\\
G_h^{(lin)} \frac{p^{n+1}_h - p^{n}_h}{\tau} 
& + G_h^{n, (nl)} \frac{p^{n}_h - p^{n-1}_h}{\tau}
= B_{21} \frac{p^{n+1}_h - p^{n-1}_h}{2\tau} 
+ R_{21} (p^{n+1}_h - 2 p^{n}_h + p^{n-1}_h)
\end{split}
\]
with 
\[
\begin{split}
&B_{11} = M_h^{(lin)} + M_h^{n, (nl)} + \tau A_h^{(lin)}, \quad 
R_{11} = \frac{1}{2 \tau} \left(M_h^{(lin)} - M_h^{n, (nl)} + \tau A_h^{(lin)}\right), 
\\
&
A_{11} = A_h^{(lin)} + A_h^{n, (nl)} = A_h^n
\\
&
B_{22} = K_h^{(lin)} + K_h^{n, (nl)}, \quad 
R_{22} = \frac{1}{2 \tau} \left( K_h^{(lin)} - K_h^{n, (nl)} \right),
\\
&
B_{12} = \alpha \big(D_h^{(lin)} + D_h^{n, (nl)}\big), \quad 
R_{12} = \frac{\alpha}{2 \tau} \left( D_h^{(lin)} - D_h^{n, (nl)} \right),\\
&
B_{21} = \alpha \big(G_h^{(lin)} + G_h^{n, (nl)}\big), \quad 
R_{21} = \frac{\alpha}{2 \tau} \left( G_h^{(lin)} - G_h^{n, (nl)} \right).
\end{split}
\]

We let $y_1^n = p^n_h$ and $y_2^n = u^n_h$, and  introduce auxiliary variables in order to represent the scheme \eqref{eq:imex1} as a two-step scheme
\[
w_i^n = y_i^n - y_i^{n-1}, \quad v_i^n = \frac{y_i^n + y_i^{n-1}}{2},\quad i=1,2.
\]
For $w_i^n$ and $v_i^n$, we have
\[
\begin{split}
& 
w_i^{n+1} + w_i^{n} = 2 (v_i^{n+1} - v_i^{n}) = y_i^{n+1} - y_i^{n-1}, \quad 
w_i^{n+1} - w_i^{n} = y_i^{n+1} - 2 y_i^{n} + y_i^{n-1}, \\
&
v_i^{n+1} + v_i^{n} = 2 v_i^{n} + (v_i^{n+1}- v_i^{n}), \quad 
y_i^n 
= \frac{1}{2} (v_i^{n+1} + v_i^{n}) - \frac{1}{4} (w_i^{n+1}- w_i^{n}).
\end{split}
\]
It follows that
\[
B_{ij} \frac{y_j^{n+1} - y_j^{n-1}}{2\tau} 
+ R_{ij} (y_j^{n+1} - 2 y_j^{n} + y_j^{n-1}) 
= 
B_{ij} \frac{w_j^{n+1} + w_j^{n}}{2\tau}
+  R_{ij} (w_j^{n+1} - w_j^{n}), 
\]
and
\[
\begin{split}
B_{11} &\frac{y_1^{n+1} - y_1^{n-1}}{2\tau} 
+ R_{11} (y_1^{n+1} - 2 y_1^{n} + y_1^{n-1})
+ A_{11} y_1^{n} \\
&= 
B_{11}  \frac{w_1^{n+1} + w_1^{n}}{2 \tau}
+ R_{11} (w_1^{n+1} - w_1^{n}) 
+ \frac{1}{2} A_{11}(v_1^{n+1} + v_1^{n}) - \frac{1}{4} A_{11}(w_1^{n+1} - w_1^{n}) \\
& = 
B_{11} \frac{w_1^{n+1} + w_1^{n}}{2 \tau}
+ \left( R_{11} - \frac{1}{4} A_{11} \right)  (w_1^{n+1} - w_1^{n}) 
+ \frac{1}{2} A_{11} (v_1^{n+1} + v_1^{n}) \\
& = 
\frac{1}{\tau}B_{11} (v_1^{n+1} - v_1^{n})
+ \left( R_{11} - \frac{1}{4} A_{11} \right)  (w_1^{n+1} - w_1^{n}) 
+ \frac{1}{2} A_{11} (v_1^{n+1} - v_1^{n}) + A_{11} v_1^{n} \\
& = 
\frac{1}{\tau} \left(B_{11} + \frac{\tau}{2} A_{11}\right) (v_1^{n+1} - v_1^{n})
+ Q_{11} (w_1^{n+1} - w_1^{n}) 
+ A_{11} v_1^{n},
\end{split}
\]
with $Q_{11} = \left( R_{11} - \tfrac{1}{4} A_{11} \right)
= \tfrac{1}{2 \tau} \left(M_h^{(lin)} - M_h^{n, (nl)} \right)
+ \tfrac{1}{4}\left(A_h^{(lin)} - A_h^{n, (nl)}\right)$.

Then, we can rewrite \eqref{eq:imex1} as follows
\begin{equation}
\begin{split}
\left(B_{11} +\frac{\tau}{2} A_{11} \right)& \frac{v_1^{n+1} - v_1^{n}}{\tau}
 + Q_{11}(w_1^{n+1} - w_1^{n}) + A_{11} v_1^{n}\\
&
+ B_{12} \frac{v_2^{n+1} - v_2^{n}}{\tau}
+ R_{12} (w_2^{n+1} - w_2^{n})
= F_{p,h}^{n}, 
\\
B_{21} &\frac{v_1^{n+1} - v_1^{n}}{\tau}
+ R_{21} (w_1^{n+1} - w_1^{n})\\
&
+ B_{22} \frac{v_2^{n+1} - v_2^{n}}{\tau}
+ R_{22} (w_2^{n+1} - w_2^{n}) = F_{u,h}^n - F_{u,h}^{n-1}.
\end{split}
\label{eq:st3}
\end{equation}

Additionally, we add equations for $w_1$ and $w_2$ to produce general system with a square matrices \cite{vabishchevich2013additive}
\[
-\tau Q_{ij}  \frac{v_j^{n+1} - v_j^{n}}{\tau} 
+ \frac{\tau}{2} Q_{ij} \frac{w_j^{n+1} - w_j^{n}}{\tau} 
+ Q_{ij} w_i^n
= - Q_{ij} (v_j^{n+1} - v_j^{n}) 
+ \frac{1}{2} Q_{ij} (w_j^{n+1} + w_j^{n}) 
= 0,
\]
where matrices $Q_{ij}$ will be defined later in order to ensure that the resulting general block matrix is skew‑symmetric.

We let $Y = (v_1, w_1, v_2, w_2)$  and rewrite the system \eqref{eq:st3} in the following form  
\begin{equation}    
\mathbb{B} \frac{Y^{n+1} - Y^n}{\tau} + \mathbb{A} Y^n = F,
\label{eq:canonicalform}
\end{equation}
with 
\[
\begin{split}
&\mathbb{B} = 
\begin{bmatrix}
B_{11} +\frac{\tau}{2} A_{11} & \tau Q_{11} & B_{12} & \tau R_{12} 
\\
- \tau Q_{11} & \frac{\tau}{2} Q_{11} & \tau Q_{12} & -\frac{\tau}{2} Q_{12}
\\
B_{21} & \tau R_{21} & B_{22} & \tau R_{22} 
\\
\tau Q_{21} & -\frac{\tau}{2} Q_{21} & -\tau Q_{22} & \frac{\tau}{2} Q_{22}
\end{bmatrix}, 
\\
&\mathbb{A} = 
\begin{bmatrix}
A_{11} & 0 & 0 & 0 \\
0 & Q_{11} & 0 & -Q_{12} \\
0 & 0 & 0 & 0 \\
0 & -Q_{21} & 0 & Q_{22}
\end{bmatrix}, \quad 
F^n = \begin{bmatrix}
F_1^n\\
0\\
F_2^n\\
0
\end{bmatrix},
\end{split}
\]
where $F_1^n = F_{p,h}^{n}$ and $F_2^n = F_{u,h}^n - F_{u,h}^{n-1}$.
Here, we set 
$Q_{22} = R_{22}$,
$Q_{12} = R_{12}$ and 
$Q_{21} = R_{21}$ to preserve a skew-symmetry of the block matrices $\mathbb{B}$ and $\mathbb{A}$ with $R_{12} = -R_{21}^T$

Next, we employ additive representation 
\[
\mathbb{B} = \mathbb{B}^{(0)} + \mathbb{B}^{(1)}, 
\quad 
\mathbb{A} = \mathbb{A}^{(0)} + \mathbb{A}^{(1)}, 
\]
where the superscripts $(0)$ and $(1)$ correspond to the linear and nonlinear parts, respectively, 
then
\[
\mathbb{W} 
=
\left( \mathbb{B} - \frac{\tau}{2} \mathbb{A} \right) 
=
\begin{bmatrix}
B_{11} & \tau Q_{11} & B_{12} & \tau Q_{12} \\
- \tau Q_{11} & 0 & \tau Q_{12} & 0 \\
B_{21} & \tau Q_{21} & B_{22} & \tau Q_{22} \\
\tau Q_{21} & 0 & -\tau Q_{22} & 0
\end{bmatrix}
= \mathbb{W}^{(0)} + \mathbb{W}^{(1)}.
\]
With $A_h = A_h^T$, $M_h = M_h^T$, $K_h = K_h^T$, $D_h = -G_h^T$, and $\check{Y} = (\check{v}_1, \check{w}_1, \check{v}_2, \check{w}_2)$, we have 
\[
\begin{split}
(\mathbb{A} Y , \check{Y}) 
 &= (A_{11} v_1, \check{v}_1) 
+ (Q_{11} w_1, \check{w}_1) - (Q_{12} w_2, \check{w}_1)
- (Q_{21} w_1, \check{w}_2) + (Q_{22} w_2, \check{w}_2)\\
&= (A_{11} v_1, \check{v}_1) 
+ (Q_{11} w_1, \check{w}_1)
+ (Q_{22} w_2, \check{w}_2)
= (\mathbb{A} \check{Y}, Y),
\\
 (\mathbb{W} Y , Y)& = 
(B_{11} v_1, v_1) + \tau (Q_{11} w_1, v_1) +  (B_{12} v_2, v_1) \\
& + \tau (Q_{12}  w_2, v_1)
- \tau (Q_{11} v_1, w_1) + \tau (Q_{12} v_2, w_1) \\
& + (B_{21} v_1, v_2) + \tau (Q_{21} w_1, v_2) + (B_{22} v_2, v_2) \\
& + \tau (Q_{22} w_2, , v_2) 
+ \tau (Q_{21} v_1, w_2) -\tau (Q_{22} v_2, w_2)\\
& = 
(B_{11} v_1, v_1)  + (B_{22} v_2, v_2) 
> 0.
\end{split}
\]
Furthermore, $\mathbb{W}$ is skew-symmetric and we choose an additive representation that gives skew-symmetric matrices $\mathbb{W}^{(0)}>0$ and $\mathbb{W}^{(1)}>0$, where
\[
\begin{split}
&\mathbb{W}^{(0)} = 
\begin{bmatrix}
M_h^{(lin)} + \tau A_h^{(lin)} & 
\frac{1}{2} M_h^{(lin)} + \frac{\tau}{4} A_h^{(lin)} &  
\alpha D_h^{(lin)} & 
\frac{\alpha}{2} D_h^{(lin)}  \\
- \big( \frac{1}{2} M_h^{(lin)} + \frac{\tau}{4} A_h^{(lin)} \big) & 0 & 
\frac{\alpha}{2} D_h^{(lin)} & 0 \\
\alpha G_h^{(lin)} & 
\frac{\alpha}{2} G_h^{(lin)} & 
 K_h^{(lin)} & 
\frac{1}{2} K_h^{(lin)} \\
\frac{\alpha}{2} G_h^{(lin)}  & 0 & 
-\frac{1}{2} K_h^{(lin)} & 0
\end{bmatrix},
\\
& \mathbb{W}^{(1)} = 
\begin{bmatrix}
M_h^{n, (nl)} & 
- \big(\frac{1}{2}  M_h^{n, (nl)} + \frac{\tau}{4} A_h^{n, (nl)}\big) & 
\alpha D_h^{n, (nl)} & 
- \frac{\alpha}{2} D_h^{n, (nl)} 
\\
\big(\frac{1}{2}  M_h^{n, (nl)} + \frac{\tau}{4} A_h^{n, (nl)}\big) & 0 & 
- \frac{\alpha}{2} D_h^{n, (nl)} & 0 
\\
\alpha G_h^{n, (nl)} & 
- \frac{\alpha}{2} G_h^{n, (nl)} & 
 K_h^{n, (nl)} & 
 - \frac{1}{2}K_h^{n, (nl)} \\
- \frac{\alpha}{2} G_h^{n, (nl)} & 0 & 
\frac{1}{2}K_h^{n, (nl)} & 0
\end{bmatrix}.
\end{split}
\]
Then, we have 
\[
\mathbb{B}^{(0)} \frac{Y^{n+1} - Y^n}{\tau} + \mathbb{A}^{(0)} Y^n 
= \tilde{F}, \quad 
\tilde{F} = 
F - \mathbb{B}^{(1)} \frac{Y^{n+1} - Y^n}{\tau} - \mathbb{A}^{(1)} Y^n.
\]
or
\[
\begin{split}
&\mathbb{B}^{(0)} \frac{Y^{n+1} - Y^n}{\tau} + \mathbb{A}^{(0)} Y^n
= 
\mathbb{B}^{(0)} \frac{Y^{n+1} - Y^n}{\tau}
+ \frac{1}{2} \mathbb{A}^{(0)}  ((Y^{n+1} + Y^n) - (Y^{n+1} - Y^n)) 
\\
& \quad \quad  = 
\mathbb{W}^{(0)} \frac{Y^{n+1} - Y^n}{\tau}
+ \frac{1}{2} \mathbb{A}^{(0)} (Y^{n+1} + Y^n)
= \tilde{F}, 
\end{split}
\]
with $\tilde{F} = F - \mathbb{W}^{(1)} \frac{Y^{n+1} - Y^n}{\tau}
- \frac{1}{2} \mathbb{A}^{(1)} (Y^{n+1} + Y^n)$. 

Finally, by multiplying by $v=(Y^{n+1} - Y^n)/\tau$ and using the Cauchy–Schwarz and Young's inequalities, and assuming 
\[
\mathbb{A}^{(1)} \leq (1 - \varrho) \mathbb{A}^{(0)}, \quad
\mathbb{B}^{(1)} \leq (1 - \varrho) \mathbb{B}^{(0)}, \quad 0<\varrho<1,
\]
we obtain
\[
\begin{split}
&\left(  \mathbb{W}^{(0)} \frac{Y^{n+1} - Y^n}{\tau},  \frac{Y^{n+1} - Y^n}{\tau} \right) 
+ \frac{1}{2} \left( \mathbb{A}^{(0)} (Y^{n+1} + Y^n) , \frac{Y^{n+1} - Y^n}{\tau} \right) \\
& \quad = \left(\tilde{F},  \frac{Y^{n+1} - Y^n}{\tau} \right) \leq  
2 \epsilon \left|\left| \frac{Y^{n+1} - Y^n}{\tau} \right|\right|^2_{\mathbb{W}^{(0)}} 
+ \frac{1}{2 \epsilon} ||F||^2_{(\mathbb{W}^{(0)})^{-1}}\\
& \quad 
+ (1 - \varrho) \left(  \mathbb{W}^{(0)} \frac{Y^{n+1} - Y^n}{\tau},  \frac{Y^{n+1} - Y^n}{\tau} \right)  \\
& \quad 
+ (1 - \varrho) \frac{1}{2} \left( \mathbb{A}^{(0)} (Y^{n+1} + Y^n) , \frac{Y^{n+1} - Y^n}{\tau} \right).
\end{split}
\]

Then with $(\mathbb{A}^{(0)} (Y^{n+1} + Y^n) , Y^{n+1} - Y^n) = (\mathbb{A}^{(0)} Y^{n+1} , Y^{n+1}) - (\mathbb{A}^{(0)} Y^{n} , Y^{n})$, we obtain
\[
\begin{split}
&\frac{\varrho }{\tau^2} ||Y^{n+1} - Y^n||^2_{ \mathbb{W}^{(0)}}
+ \frac{\varrho \tau}{2 \tau^2} ||Y^{n+1}||^2_{\mathbb{A}^{(0)}}\\
& \quad 
\leq \frac{\varrho \tau}{2 \tau^2}||Y^{n}||^2_{\mathbb{A}^{(0)}} 
+ \frac{2 \epsilon}{\tau^2} ||Y^{n+1} - Y^n||^2_{\mathbb{W}^{(0)}} 
+ \frac{1}{2 \epsilon} ||F||^2_{(\mathbb{W}^{(0)})^{-1}}.
\end{split}
\]
We set $\varepsilon = \varrho/2$ and get
\[
||Y^{n+1}||^2_{\mathbb{A}^{(0)}}
\leq ||Y^{n}||^2_{\mathbb{A}^{(0)}} 
+ \frac{2 \tau}{\varrho^2} ||F||^2_{(\mathbb{W}^{(0)})^{-1}}
\]
Therefore, we have a stability of the scheme \eqref{eq:canonicalform} in $\mathbb{A}^{(0)}$ norm. 
\end{proof}

\begin{remark}
We emphasize that the above result concerns \textit{linear stability} of the proposed implicit--explicit scheme. Nonlinear stability and convergence are not directly guaranteed by this analysis. However, in practice, the linear stability result is expected to provide useful guidance in regimes where the nonlinear dynamics are well resolved by the discrete time steps, and the linearized operators are bounded as assumed in the statement of Theorem \ref{t:t2}. Numerical experiments in Section~\ref{sec:num} confirm that the scheme remains robust and accurate for the considered range of time steps and problem parameters, suggesting that the linear stability analysis offers practical insight into the method's behavior.  A detailed investigation of nonlinear stability and convergence will be pursued in future work.
\end{remark}

\subsection{Choice of the additive representation and system of linear equation}

Let 
\[
\begin{split}
&\bar{c}(x) = \max_{p \in [p_{min}, p_{max}]} c(x, p), \quad 
\bar{S}(x) = \max_{p \in [p_{min}, p_{max}]} S(x,p), 
\\
&\bar{\kappa}(x) = \max_{p \in [p_{min}, p_{max}]} \kappa(x, p), \quad 
\bar{E}(x) = \max_{p \in [p_{min}, p_{max}]} E(x, p).
\end{split}
\]
Here, for example, $p_{min}$ can be given as the boundary condition $p_1 = p_{min}$ and $p_{max}$ can be given as the initial condition $p_0 = p_{max}$. 

Then, we set
\[
\bar{\sigma} =  2 \bar{\mu}(x) \varepsilon (u) + \bar{\lambda}(x) (\nabla \cdot u) \mathcal{I},
\quad 
\bar{\lambda}(x)  = \frac{\bar{E}(x) \nu}{(1+\nu)(1-2\nu)},\quad 
\bar{\mu}(x) = \frac{\bar{E}(x)}{2(1+\nu)}, 
\]
and 
$\tilde{c}(x, p) = c(x, p) - \bar{c}(x)$, 
$\tilde{\kappa}(x, p) = \kappa(x, p) - \bar{\kappa}(x)$, 
$\tilde{S}(x, p) = S(x, p) - \bar{S}(x)$, 
$\tilde{\mu}(x, p) = \mu(x, p) - \bar{\mu}(x)$, 
$\tilde{\lambda}(x, p) = \lambda(x, p) - \bar{\lambda}(x)$.

Finally, we have  the following operators in the  implicit-explicit scheme \eqref{eq:imex1}
\begin{itemize}
\item \textit{Linear/Implicit}:
\[ 
\begin{split}
&A_h^{(lin)} = \Big[ a^{(lin)}_{ij} = \int_{\Omega} \bar{\kappa} \nabla \phi_i \cdot \nabla \phi_j \ dx \Big], 
\quad 
M^{(lin)}_h = \Big[m^{(lin)}_{ij} = \int_{\Omega} \bar{c} \ \phi_i \phi_j \ dx \Big],  
\\ &
K_h^{(lin)} = \Big[b^{(lin)}_{ln}=
\int_{\Omega} (
2 \bar{\mu} \varepsilon(\Phi_l) : \varepsilon(\Phi_n) dx + 
\bar{\lambda} \nabla \cdot \Phi_l \ \nabla \cdot \Phi_n
) dx \Big],
\\ &
D_h^{(lin)} = \Big[d_{il}^{(lin)} = \int_{\Omega} \bar{S} \ \phi_i \nabla \cdot \Phi_l   dx \Big],  
\quad 
G_h^{(lin)} = \Big[g_{li}^{(lin)}  = \int_{\Omega}  \nabla \big( \bar{S} \phi_i \big) \cdot  \Phi_l \ dx \Big],  
\end{split}
\]
\item \textit{Nonlinear/Explicit}
\[ 
\begin{split}
&A_h^{n, (nl)} = \Big[a^{n, (nl)}_{ij} = \int_{\Omega} \tilde{\kappa}^n \nabla \phi_i \cdot \nabla \phi_j \ dx \Big],  \quad 
M^{n, (nl)}_h = \Big[m^{n, (nl)}_{ij}  = \int_{\Omega} \tilde{c}^n \phi_i \phi_j \ dx \Big], 
\\ &
K_h^{n, (nl)} = \Big[b^{n, (nl)}_{ln} = 
\int_{\Omega} (
2 \tilde{\mu}^{n} \varepsilon(\Phi_l) : \varepsilon(\Phi_n) dx + 
\tilde{\lambda}^{n} \nabla \cdot \Phi_l \ \nabla \cdot \Phi_n
) dx \Big],
\\ &
D_h^{n, (nl)} = \Big[d_{il}^{n, (nl)}  = \int_{\Omega} \tilde{S}^n \ \phi_i \nabla \cdot \Phi_l   dx \Big], 
\quad 
G_h^{n, (nl)} = \Big[g_{li}^{n, (nl)}  = \int_{\Omega}  \nabla \big( \tilde{S}^n \phi_i \big) \cdot  \Phi_l \ dx \Big].
\end{split}
\]
\end{itemize}

The implementation of the scheme \eqref{eq:imex1} at each time iteration can be written as follows 
\begin{equation}
\begin{bmatrix}
  M_h^{(lin)} + \tau A_h^{(lin)} & \alpha D_h^{(lin)} \\
  \alpha G_h^{(lin)} & K_h^{(lin)}
\end{bmatrix} 
\begin{bmatrix}
  p^{n+1}_h \\
  u^{n+1}_h
\end{bmatrix}  
= \begin{bmatrix}
  b^n_{p,h} \\
  b^n_{u,h}
\end{bmatrix} 
\label{eq:imex2} 
\end{equation}
with 
\[
\begin{split}
\begin{bmatrix}
  b^n_{p,h} \\
  b^n_{u,h}
\end{bmatrix} 
& = 
\begin{bmatrix}
 \tau F_{p, h} \\
  F_{u,h}^n - F_{u,h}^{n-1}
\end{bmatrix}
+ 
\begin{bmatrix}
  M_h^{(lin)} & \alpha D_h^{(lin)} \\
  \alpha G_h^{(lin)} & K_h^{(lin)}
\end{bmatrix} 
\begin{bmatrix}
  p^{n}_h \\
  u^{n}_h
\end{bmatrix} \\
&- 
\begin{bmatrix}
  M_h^{n, (nl)} & \alpha D_h^{n, (nl)} \\
  \alpha G_h^{n, (nl)} & K_h^{n, (nl)}
\end{bmatrix} 
\begin{bmatrix}
  p^{n}_h - p^{n-1}_h \\
  u^{n}_h - u^{n-1}_h
\end{bmatrix} 
 -
\begin{bmatrix}
 \tau A_h^{n, (nl)} & 0 \\
 0 & 0
\end{bmatrix} 
\begin{bmatrix}
  p^{n}_h \\
  u^{n}_h
\end{bmatrix}.
\end{split}
\]
Here, at each iteration, we have a linear system, where the matrix does not depend on the current solution. Therefore, the matrix is constructed only once, and by pre-inverting/pre-factorizing or pre-initializing a solver, we can significantly accelerate the simulations.

\section{Two-grid solver}\label{sec:prec}

This section outlines the construction of the two-grid solver for the unsaturated poroelasticity problem, focusing on its two main components: (1)
the construction of the coarse grid approximation and (2) the design of the coupled smoothing iterations. 

Using the proposed implicit-explicit scheme \eqref{eq:imex2}, we consider the following linear system on each time iteration for $y^{n+1}_h = (p^{n+1}_h,  u^{n+1}_h)$
\begin{equation}
L_h^{(lin)} y^{n+1}_h = b^n_h,
\label{eq:tg1} 
\end{equation}
where $L_h^{(lin)}$ is defined using the linear part of the operators, 
\[
L_h^{(lin)} =
\begin{bmatrix}
  M_h^{(lin)} + \tau A_h^{(lin)} & \alpha D_h^{(lin)} \\
  \alpha G_h^{(lin)} & K_h^{(lin)}
\end{bmatrix}, \quad 
b^n_h = \begin{bmatrix}
  b^n_{p,h} \\
  b^n_{u,h}
\end{bmatrix} 
\]

In this work, we consider the following two-grid algorithm for a given initial guess $y^{(0)}$
\begin{enumerate}
\item Coarse-grid correction:
\begin{enumerate}
\item Restriction:  $r_H = P^T (b^n_h - L_h^{(lin)} y^{(0)})$.
\item Coarse-grid solution: $L_H^{(lin)} e_H = r_H$ with $L_H^{(lin)} = P^T L_h^{(lin)} P$
\item Interpolation and update: $y^{(1)} = y^{(0)} + P e_H$.
\end{enumerate}
\item Post-smoothing: $y^{(2)} = y^{(1)} + S_h^{-1} (b^n_h - L_h^{(lin)} y^{(1)})$ and  $y^{n+1}_h = y^{(2)}$.
\end{enumerate}
Here,  $S_h \in \mathbb{R}^{N_h \times N_h}$ is a smoothing operator and $L_H^{(lin)} \in \mathbb{R}^{N_H \times N_H}$ is the Galerkin coarse grid matrix constructed using the prolongation operator $P \in \mathbb{R}^{N_h \times N_H}$ and the restriction operator $R = P^T \in \mathbb{R}^{N_H \times N_h}$. 
We note that in this setting, we only use post-smoothing iterations \cite{chung2015residual, tyrylgin2021online}.

\subsection{Coarse grid solver}

To construct a coarse grid solver, we follow a similar approach as considered in geometric multigrid method and apply a continuous Galerkin approximation on the coarse grid.  We define the local domain $\omega_l$ as a combination of several coarse grid cells $K_i$ that share the same coarse grid node ($l = 1,...,N_v^H$, and $N_v^H$ is the number of coarse grid vertices). We denote the coarse grid with $\mathcal{T}^H = \cup_{i=1}^{N_c^H} K_i$ (see Figure \ref{meshc}).

\begin{figure}[h!]
\centering
\includegraphics[width=0.4\linewidth]{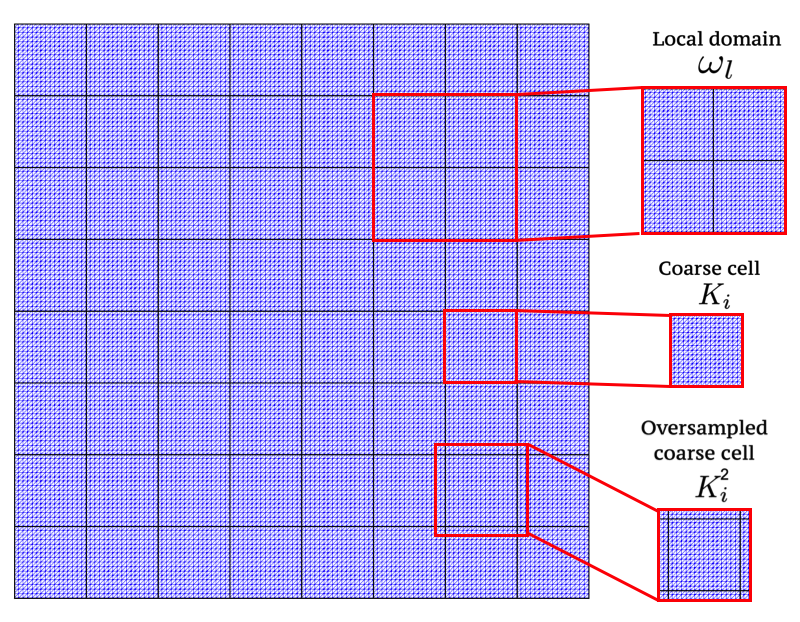}
\caption{Fine grid (blue color), coarse grid (black color), local domain $\omega_l$, coarse cell $K_i$ and oversampled coarse cell $K_i^o$ for $o=2$ with local fine grid resolution.}
\label{meshc}
\end{figure}
We note that the multiscale space is constructed in a preprocessing stage for a given heterogeneity setting \cite{brown2016generalized, akkutlu2018multiscale}. 

\textit{Multiscale basis functions for pressure.}
We solve the following local spectral problem for $(\lambda_{p,j}, \phi^{\omega_l}_j)$ in the local domain $\omega_l$ with a fine-scale resolution
\begin{equation}
\label{eq:evp-p}
A_{h}^{(lin),\omega_l} \phi^{\omega_l}_j = \lambda_{p,j} S_{h}^{A,\omega_l} \phi^{\omega_l}_j,
\end{equation}
where 
\[
\begin{split}
& A_{h}^{(lin),\omega_l} = [a_{ij}], \quad 
a_{ij} 
= \int_{\omega_l} \bar{\kappa}(x) \nabla \phi_i \cdot \nabla \phi_j dx,
\\ &
S_{h}^{A,\omega_l} = [s^A_{ij}], \quad 
s^A_{ij}
= \int_{\omega_l} \bar{\kappa}(x) \phi_i \phi_j dx.
\end{split}
\]
We choose eigenvectors $\phi^{\omega_l}_j$ ($j = 1,..,M^{l,p}$) corresponding to the smallest $M^{l,p}$ eigenvalues in $\omega_l$ and multiply by linear partition of unity functions $\chi^l$ to obtain conforming basis functions
\[
Q_{ms} = \text{span} \lbrace 
\psi^{\omega_l}_j, \, l = 1,...,N^H_v, \, j = 1,...,M^{l,p}
\rbrace,
\]
where $\psi^{\omega_l}_j = \chi^l \phi^{\omega_l}_j$ and $\chi^l$ is the linear partition of unity function in $\omega_l$.

\textit{Multiscale basis functions for displacements.}
We solve the following local spectral problem for $(\lambda_{u,j}, \Phi^{\omega_l}_j)$ in the local domain with a fine-scale resolution
\begin{equation}
\label{eq:evp-u}
K_{h}^{(lin),\omega_l}  \Phi^{\omega_l}_j = \lambda_{u,j} S_{h}^{K, \omega_l} \Phi^{\omega_l}_j,
\end{equation}
where 
\[
\begin{split}
&K_{h}^{(lin),\omega_l} = [b_{ij}], \quad  
b_{ij} = \int_{\omega_l} \bar{\sigma}(\Phi_i) : \varepsilon(\Phi_j) dx,
\\ &
S_{h}^{K,\omega_l} = [s^K_{ij}], \quad 
s^K_{ij} = \int_{\omega_l} (\bar{\lambda} + 2 \bar{\mu}) \Phi_i \Phi_j dx. 
\end{split}
\]
We choose eigenvectors $\Phi^{\omega_l}_j$ ($j = 1,..,M^{l,u}$) corresponding to the smallest $M^{l,u}$ eigenvalues and multiply by linear partition of unity functions for obtaining conforming basis functions
\[
V_{ms} = \text{span} \lbrace 
\Psi^{\omega_l}_j , \, l = 1,...,N^H_v, \, j = 1,...,M^{l,u}
\rbrace,
\]
where $\Psi^{\omega_l}_j = \chi^l \Phi^{\omega_l}_j$.

\textit{Coarse grid system. }
Using constructed multiscale basis functions for pressure and displacements, we define the prolongation matrix as follows
\begin{equation}
P = \begin{bmatrix}
P_p & 0 \\
0 & P_u
\end{bmatrix}
\end{equation}
where 
\begin{equation}
\label{eq:projmatrix}
\begin{split}
& P_p = (
\psi^{\omega_1}_1 \ldots \psi^{\omega_1}_{M^{1,p}} 
\ldots 
\psi^{\omega_{N^H_v}}_1 \ldots \psi^{\omega_{N_v^H}}_{M^{N^H_v,p}}
), \\
& P_u = (
\Psi^{\omega_1}_1 \ldots \Psi^{\omega_1}_{M^{1,u}} 
\ldots 
\Psi^{\omega_{N^H_v}}_1 \ldots 
\Psi^{\omega_{N^H_v}}_{M^{N^H_v,u}}
).
\end{split}
\end{equation}
We note that for discretization in the local domain $\omega_l$, we use the same approximation method as used in the global problem \eqref{eq:app2} and construct a map from local degress of freedoms (DOFs) to global ones as the prolongation operator. 

Therefore, we have
\begin{equation}
\begin{split}
L_H^{(lin)} 
= P^T L_h^{(lin)} P
& = 
\begin{bmatrix}
P_p^T & 0 \\
0 & P_u^T
\end{bmatrix}
\begin{bmatrix}
  M_h^{(lin)} + \tau A_h^{(lin)} & \alpha D_h^{(lin)} \\
  \alpha G_h^{(lin)} & K_h^{(lin)}
\end{bmatrix}
\begin{bmatrix}
P_p & 0 \\
0 & P_u
\end{bmatrix}\\
& = \begin{bmatrix}
  P_p^T (M_h^{(lin)} + \tau A_h^{(lin)}) P_p & \alpha P_p^T D_h^{(lin)} P_u \\
  \alpha P_u^T G_h^{(lin)} P_p & P_u^T K_h^{(lin)} P_u
\end{bmatrix}\\
&= \begin{bmatrix}
  M_H^{(lin)} + \tau A_H^{(lin)} & \alpha D_H^{(lin)} \\
  \alpha G_H^{(lin)} & K_H^{(lin)}
\end{bmatrix}.
\end{split}
\label{eq:tg3} 
\end{equation}
with 
\[
\begin{split}
&M_H^{(lin)} = P_p^T M_h^{(lin)} P_p, \quad 
A_H^{(lin)} = P_p^T A_h^{(lin)} P_p, \quad 
K_H^{(lin)} = P_u^T K_h^{(lin)} P_u, 
\\
& D_H^{(lin)} = P_p^T D_h^{(lin)} P_u, \quad 
G_H^{(lin)} = P_u^T G_h^{(lin)} P_p.
\end{split}
\]
We use a direct solver to solve the coarse-scale problem, assuming aggressive coarsening so that the resulting coarse system is small and can be inverted efficiently. The total number of coarse-scale unknowns is $N_H = N_H^p + N_H^u$, where $N_H^p = \sum_{l=1}^{N_v^H} (1 + M^{l,p})$ and $N_H^u = \sum_{l=1}^{N_v^H} (1 + M^{l,u})$. For smoothing iterations, we consider the classical pointwise smoothers Jacobi and Gauss-Seidel, and further propose a coupled blockwise smoother.

\subsection{Smoothing iterations}

We start with defining subdomains based on the coarse grid that effectively balances computational efficiency with inter-subdomain communication, and reduces errors in a coupled way for our poroelasticity problem. 
In our approach, a coarse grid is aligned with coarse facets, such that $K_i \cap K_j = E_{ij}$. Then the computational domain $\Omega$ is divided into $N_c^H$ subdomains $\{K_i\}_{i=1}^{N_c^H}$ with minimal overlap consisting of nodes located on the interfaces $E_{ij}$. We then increase overlap considering the oversampled local domain $K_i^o$, which is constructed based on the coarse cell $K_i$ by adding $o$-layers of fine-mesh width $h$ ($o=1,2$). The combination of overlapping Schwarz smoothers of this form with local spectral coarse grids is highly related to domain decomposition methods based on generalized eigenvalue problems in the overlap (GenEO) \cite{Spillane.2014}. 
Last, we consider the case with the largest overlap define by the support of coarse grid basis functions $\omega_l$. In Figure \ref{meshc}, we depict local domains $\omega_l$ and $K_i^{o}$ for $o=0$ and $o=2$. In the context of multigrid smoothing, we update the approximation in each iteration based on the local residual over a subdomain corresponding to either the coarse grid cell $\iota_i = K_i$ or the support of the multiscale basis function $\iota_i = \omega_i$.

\begin{figure}[h!]
\centering
  \begin{minipage}[b]{0.45\textwidth}
\centering
\includegraphics[width=1\linewidth]{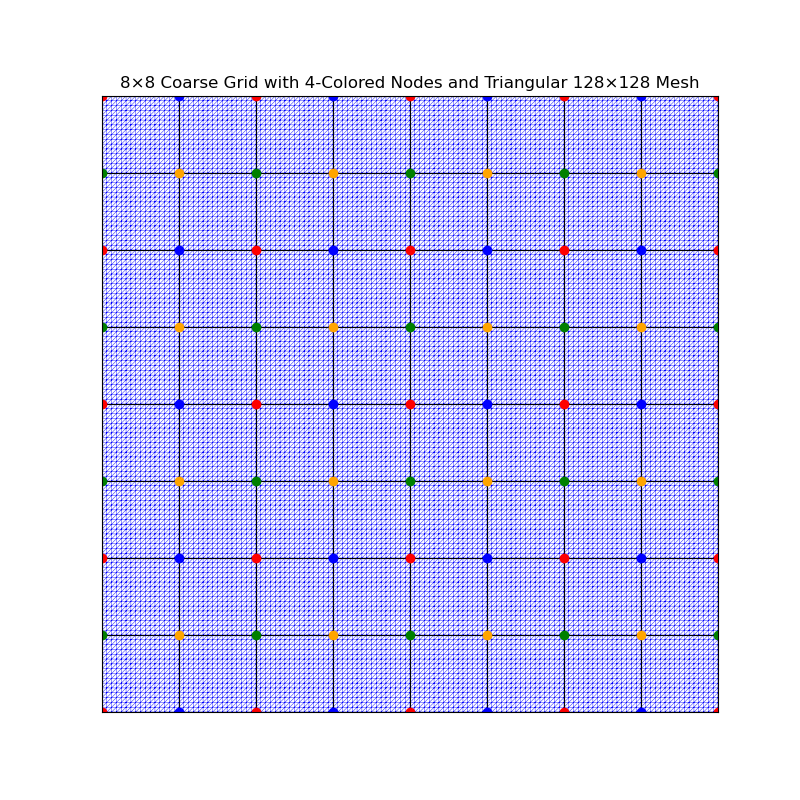}
\end{minipage}
\begin{minipage}[b]{0.45\textwidth}
\centering
\includegraphics[width=1\linewidth]{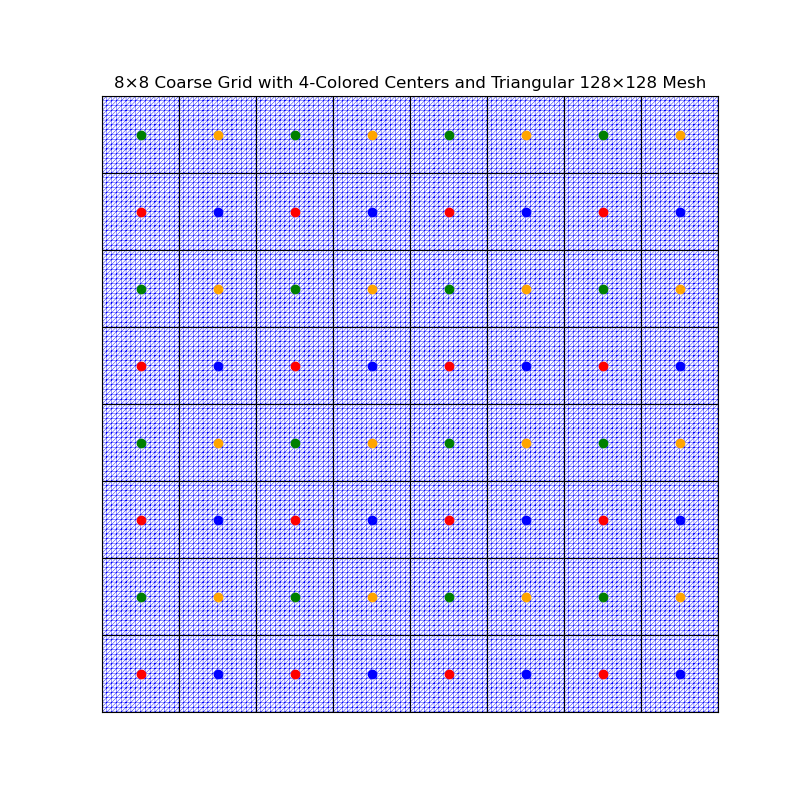}
\end{minipage}
\caption{Coarse grid and groups of subdomains $J^k$ ($k=1,2,3,4$). Left panel: $\iota_i = \omega_i$. Right panel: $\iota_i = K_i^o$.}
\label{blockcg}
\end{figure}

Let $\iota_i$ be the subdomain of global domain $\Omega = \cup_{i=1}^{N_{\iota}^H} \iota_i$, where $\iota_i = K_i$, $K_i^o$ or $\omega_i$ with $N^{\iota}_H = N^H_c$ for $\iota_i = K_i$, $K_i^o$ and $N^{\iota}_H = N^H_v$ for $\iota_i = \omega_i$ ($N^H_c$ and $N^H_v$ are the number of coarse grid cells and vertices, respectively). 
Then, we formulate the smoothing iterations as a Schwarz-type method, where we solve the following local coupled poroelasticity problems within each local domain $\iota_i$
\[
L_h^{(lin), i} w^n_i = r_i^n,
\]
where $L_h^{(lin), i}$ is the restriction of the global system matrix $L_h^{(lin)}$  to the subdomain $K_i$, and  $r^n_i$ is the restriction of the global residual $r^n$ to the local domain $K_i$, i.e.,
\[
L_h^{(lin), i} = R_i L_h^{(lin)} R_i^T, \quad 
r^n_i = R_i r^n,
\]
where $R_i$ is the restriction operator that maps global degrees of freedom to local degrees of freedom in the $i$th subdomain. Due to overlaps at coarse cell interfaces or on bigger overlap, the global correction can be expressed as a weighted average at interface points
\[
w^n = \sum_{i=1}^{N_{\iota}^H} R_i^T W_i w^n_i,
\]
where $W_i$ is the weighting matrix to account for the overlapping degrees of freedom, with each diagonal entry equal to the reciprocal of the number of patches that containing the corresponding degree of freedom \cite{greif2023closed, saberi2022restricted}. 

Then, the smoothing iterations can be represented as follows
\[
y^{(2)} = y^{(1)} + S^{-1}_h r^n = y^{(1)} + w^n, 
\]
with $w^n = S^{-1}_h r^n$,  $r^n = b_h^n - L_h^{(lin)} y^{(1)}$, 
where $y^{(1)}$ is the given approximation on the fine  grid solver 
and the global smoothing operator $S^{-1}_h$ is defined as
\[
S^{-1}_h = \sum_{i=1}^{N_{\iota}^H} R_i^T W_i (L_h^{(lin),i})^{-1} R_i.
\]
and the smoothing iterations can therefore be formulated as
\begin{equation}
y^{(2)}  = y^{(1)} 
+ \sum_{i=1}^{N_{\iota}^H} R_i^T W_i (L_h^{(lin),i})^{-1} R_i (b_h^n - L_h^{(lin)} y^{(1)}),
\label{eq:sm1}
\end{equation}
which corresponds to the additive type of smoothing iterations in subdomains.

We focus on the structured uniform coarse grid and define four groups of subdomains $J^k = \cup_{i \in I_k} \iota_i$ ($k=1,2,3,4$) based on indexing groups $I_k$ for a colored patch relaxation. This adds a level of sequential solving to the relaxation, as patches in group $k+1$ will relax with an updated residual following relaxation on patches in group $k$. In Figure \ref{blockcg}, we plot a coarse grid (black lines) and groups of subdomains for $\iota_i = \omega_i$ (maximal overlap) and $\iota_i = K_i^o$ (minimal overlap). We use four different colors to represent these groups in Figure \ref{blockcg}, with each colored point indicating the center of a subdomain (either a coarse node or a coarse cell) belonging to a specific index set $I_k$. 

In the additive approach \eqref{eq:sm1}, updates are performed in each subdomain independently in a parallel manner, making it highly scalable with $N_{\iota}^H$ independent local solves. We can also formulate the smoothing iterations in a multiplicative way, 
where updates are applied sequentially in each subgroup using the latest available solution from the previous groups, leading to faster convergence than the additive process. Moreover, the sequential approach is scalable in each group of subdomains, where calculations within the group can be done in a parallel with $N_{\iota}^H/4$ independent local solves.
The sequential iterations over groups can be represented as a fractional stepping \cite{st2007optimized}
\begin{equation}
y^{(1+k/M)}
= y^{(1+(k-1)/M)}
+ \sum_{i \in I^k} R_i^T W_i (L_h^{(lin),i})^{-1} R_i 
(b_h^n - L_h^{(lin)} y^{(1+(k-1)/M)} ),
\end{equation}
where $k=1,...,M$ and $M = 4$ is the number of subdomain groups (colors). 

\section{Numerical results}\label{sec:num}

We consider two numerical tests to evaluate the proposed splitting algorithm for the unsaturated poroelasticity problem in a heterogeneous domain:

\textit{Test 1 (splitting scheme and two-grid solver).}
We consider a fixed heterogeneous medium with prescribed contrast and model parameters in a two-dimensional setting.
(\textit{Test 1a}) First, we assess the accuracy of the proposed implicit--explicit splitting scheme by comparing it with an implicit time-stepping method employing nonlinear iterations.
(\textit{Test 1b}) Next, we investigate the performance of the proposed multiscale two-grid solver with coupled smoothing iterations on three different fine-scale grids.

\textit{Test 2 (model parameters and computational cost).}
We study the performance of the proposed two-grid method under varying heterogeneity and model parameters. 
(\textit{Test 2a}) First, we examine the effects of heterogeneity, contrast, nonlinearity, and boundary conditions on the performance of the two-grid method.
(\textit{Test 2b}) Next, we analyze in detail the offline (precomputation) and online computational costs of the proposed two-grid method for both two- and three-dimensional problems. In addition, we present results obtained without additional spectral basis functions, corresponding to the first constant eigenvector. In this case, the resulting coarse space consists of standard linear basis functions, making the method similar to the geometric multigrid like approach. However, due to the aggressive coarsening strategy and the use of only two levels, this approach is not a classical geometric multigrid method.

For the two-grid solver, we consider
\begin{itemize}
\item  implicit--explicit scheme with $N_t = 20$  time steps and fixed coarse grid solver on an $8^d$ coarse grid, where $d$ denotes the spatial dimension ($d=2,3$);

\item the number of local spectral basis functions $M = 1, 2, 4, 8$, where local spectral basis functions for pressure and displacement are precomputed for a given heterogeneous field based on the linear setting described above;

\item smoothing iterations are chosen as follows: 
(i) Gauss--Seidel (GS) iterations from the {pyamg} library \cite{bell2023pyamg}, 
(ii) the proposed coupled smoothing iterations relative to the overlapping domain decomposition method, with local domains corresponding to the support of the multiscale basis functions $\omega_i$ (V), and 
(iii) coupled smoothing iterations with local domains corresponding to the coarse cells $K_i^0 = K_i$ (VK) and overlapped coarse cells including a few fine-cell layers $K_i^o$ with $o=1$ and $2$ (VK1 and VK2);

\item Simulations are performed without pre-smoothing iterations, and the number of post-smoothing iterations is varied from 1 to 3;

\end{itemize}
The implementation is carried out in Python using the SciPy sparse library (version~1.16.2) to solve the linear systems at each time step \cite{virtanen2020scipy}. Finite element matrices are constructed using first-order continuous polynomial spaces for both the pressure and displacement variables. The finite element implementation is based on the FEniCS library~\cite{logg2012automated} (version~2019.1.0).
To solve the eigenvalue problems arising in the construction of the multiscale basis functions, we use slepc4py, the Python interface to the SLEPc library~\cite{hernandez2005slepc} (version~3.23.3). For the iterative solver, we set the relative tolerance to $10^{-9}$ and the maximum number of iterations to~500.
Simulations are run on a MacBook Pro with an Apple M2 Max chip and 32GB RAM.

\subsection{Test 1}

We consider a two-dimensional formulation on the computational domain $\Omega = [0,L]^2$ with $L = 10~\mathrm{m}$.
The heterogeneous saturated hydraulic conductivity $k_s(x)$~[$\mathrm{m}^2$] and the heterogeneous Young’s modulus in the dry state $E_d(x)$~[Pa] are shown in Figure~\ref{results1} (first and second panels), respectively.

\begin{figure}[h!]
\begin{center}
\includegraphics[width=1\linewidth]{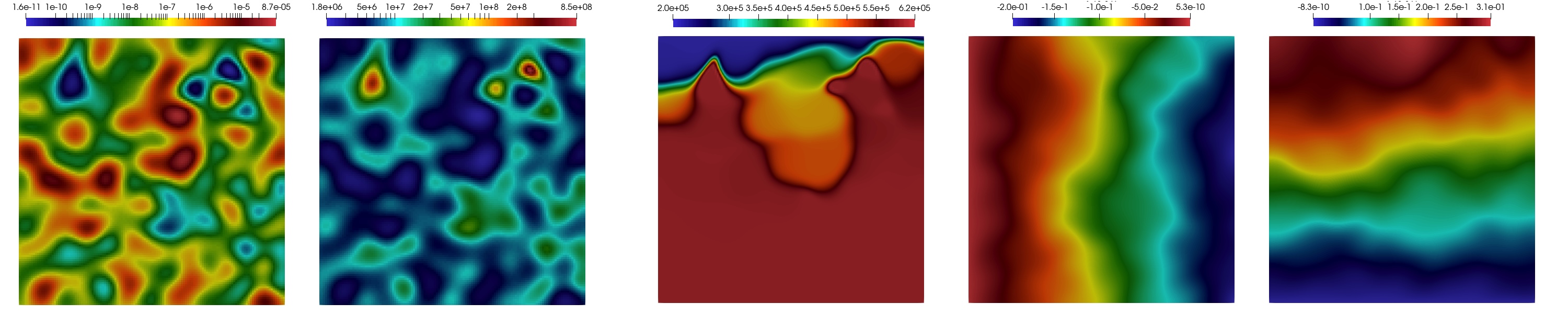}
\end{center}
\caption{Test 1. Heterogeneous coefficients $k_s(x)$[m$^2$] and $E_d(x)$[Pa], pressure [Pa] and displacements $x$ and $y$  components [cm] at final time (from left to right).}
\label{results1}
\end{figure}

In Figure \ref{results1}, we plot a reference solution for an unsaturated poroelastic model with $N_t = 20$ on $128 \times 128$ grid. We depict the pressure and the $x$ and $y$ components of the displacements at final time $t_{20}$. We can observe a strong influence of property heterogeneity on both pressure and displacements.

The model parameters are specified as follows:
\begin{itemize}
\item The van Genuchten model \eqref{eq:vg1}--\eqref{eq:vg2} is employed to describe the hydraulic nonlinearity, with $\nu = 0.37$, $\theta_r = 0.03$, $\theta_s = 0.45$, $\beta = 0.01$ [1/cm], and $n_{\theta} = 1.6$, where $m_{\theta} = 1 - 1/n_{\theta}$.
These parameters correspond to silt-type soils \cite{siltecho2015use, lu2014power}. For the nonlinear elastic model \eqref{eq:vg3}, we set $r_E = E_d / E_w = 2$ and $\zeta = 1.5$ \cite{lu2014power}.
\item The porosity is set to $\phi = 0.45$  and the Biot coefficient is set to $\alpha = 0.2$. The compressibilities of water and solid grains are taken as $C_w = C_s = 10^{-10}$ [1/Pa]. We assume water density $\rho_w = 1000$ [kg/m$^3$] and gravitational acceleration $g = 9.8$ [m/s$^2$].

\item For the flow problem, a Robin-type boundary condition is imposed on the top boundary with
$p_1 = 202{,}860$ [Pa] and $\gamma = 10^6$, while no-flux boundary conditions are applied on the remaining boundaries.
For the mechanical problem, zero displacement is enforced in the $x$-direction on the left boundary and in the $y$-direction on the bottom boundary, whereas the remaining boundaries are assumed to be stress-free.

\item The initial pore pressure is set to $p_0 = 602,700$ [Pa]. 
Simulations are carried out over the time interval $t \in [0, T_{\max}]$ with
$T_{\max} = 2$~days. The time step size is defined as $\tau = T_{\max} / N_t$.
\end{itemize}

We consider the following numerical tests:
\begin{itemize}
\item \textit{Test 1a (accuracy of the splitting scheme).}
We assess the accuracy of the proposed implicit--explicit splitting scheme by comparing it with a fully implicit time-stepping method employing nonlinear iterations.
The comparison is performed for varying numbers of time steps, $N_t = 10, 20, 40,$ and $80$, on a fixed, uniformly refined $128 \times 128$ fine grid.

\item \textit{Test 1b (performance of the two-grid solver).}
We investigate the performance of the proposed two-grid solver on  $N \times N$ grids composed of triangular elements, with $N = 64, 128,$ and $256$.
Linear basis functions are used for both pressure and displacement variables.
The corresponding total numbers of degrees of freedom are $\mathrm{DOF}_h = 3 (N+1)^2 = 12{,}657$, $49{,}923$, and $198{,}147$, respectively.
\end{itemize}

\subsubsection{Test 1a (implicit-explicit scheme)}  

First, we compare the performance of the three time integration schemes: (i) an implicit time-stepping scheme combined with Picard iterations to resolve the nonlinearity \eqref{eq:app3} (Im), a semi-implicit scheme, linearized about the previous solution \eqref{eq:app4} (sIm), and (iii) our proposed implicit-explicit scheme for linearization as in \eqref{eq:imex1}  (ImEx). 

\begin{figure}[h!]
\centering
\includegraphics[width=1\linewidth]{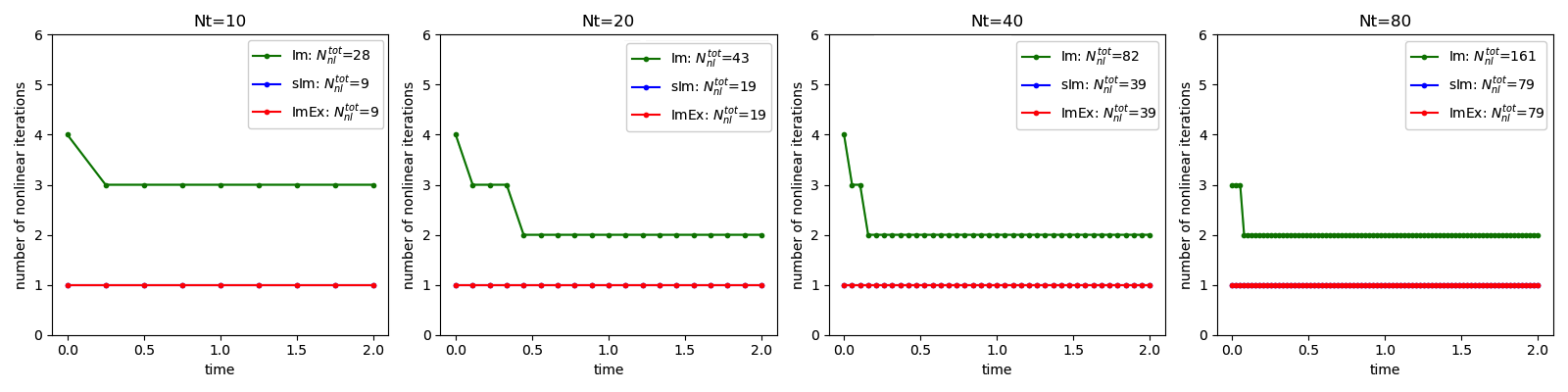}
\caption{Number of nonlinear iterations for implicit scheme with Picard iterations. 
Shown in legend total number of nonlinear iterations ($N^{tot}_{nl}$). }
\label{nonlin1}
\end{figure}

\begin{figure}[h!]
\centering
\includegraphics[width=0.6\linewidth]{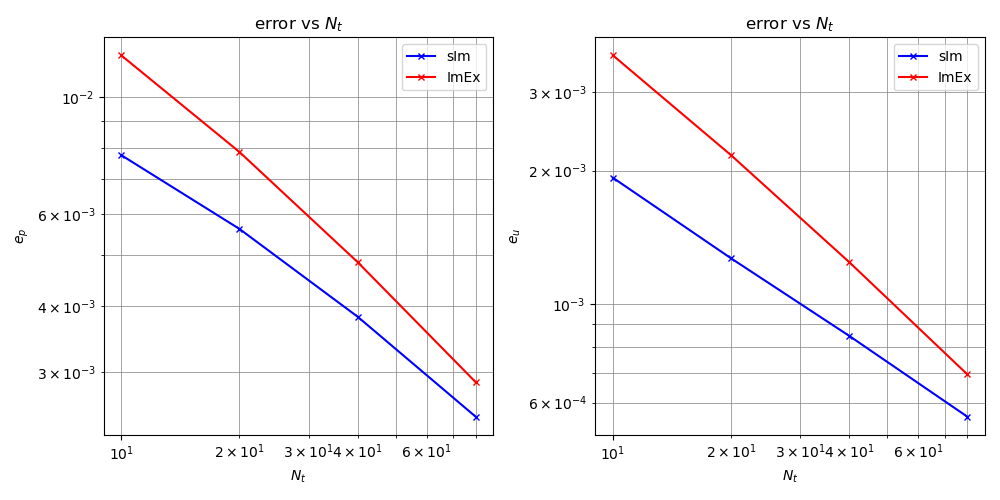}
\caption{Number of time steps ($N_t$) vs relative error for pressure $e_p$ (left) and displacements $e_u$ (right) at final time.}
\label{nonlin3}
\end{figure}

To compare two approaches for time integration, we calculate the relative error in $L_2$ norm
\[
e_p (p_{\text{ref}}, p) = \frac{||p_{\text{ref}} - p||}{||p_{\text{ref}}||}, \quad 
e_u (u_{\text{ref}}, u) = \frac{||u_{\text{ref}} - u||}{||u_{\text{ref}}||}, \quad 
||p||^2 = (p, p),
\]
where $p_{\text{ref}}$ and $u_{\text{ref}}$ are the reference solution using the implicit scheme and Picard iterations. 
In nonlinear iterations, we set $N_{nl} = 10$ as a maximum number of nonlinear iterations and iterate till $e_p(p^{n+1, (m+1)}_h, p^{n+1, (m)}_h)\leq 0.1$ \% and $e_u(u^{n+1, (m+1)}_h, u^{n+1, (m)}_h)\leq 0.1$ \%. 

In Figure \ref{nonlin1}, we plot the number of nonlinear iterations for an implicit scheme (Im) with Picard iterations in green color. We investigate the influence of the number of time steps $N_t = 10, 20, 40$. For all tested $N_t$, more Picard iterations are required in the initial time steps, eventually stabilizing to two iterations per step. Larger $N_t$ leads to a quicker reduction to two nonlinear iterations. We note that in the linearized scheme (sIm--blue, ImEx ---red), only one iteration is performed per time step. 
In Figure \ref{nonlin3}, we represent errors at a final time for different $N_t$ showing stable first-order convergence of both schemes. 
We observe almost the same errors for pressure and displacement for both linearization methods (sIm and ImEx), which decreased for a larger number of time steps as expected.

\subsubsection{Test 1b (multiscale two-grid solver with coupled smoothing iterations)} 

Next, we consider the performance of the proposed two-grid solver for an unsaturated poroelasticity problem in heterogeneous media. 
In the proposed approach with the additive splitting of the linear and nonlinear part, we have the advantage of pre-initializing a solver for a given heterogeneous properties. This includes the construction of a projection/prolongation operator, the construction of a coarse scale matrix based on the operator's linear part, and initializing coupled smoother that includes pre-inverting local matrices in the smoothing algorithm based on the linear part of the operator. 

\begin{figure}[h!]
\centering
\includegraphics[width=1\linewidth]{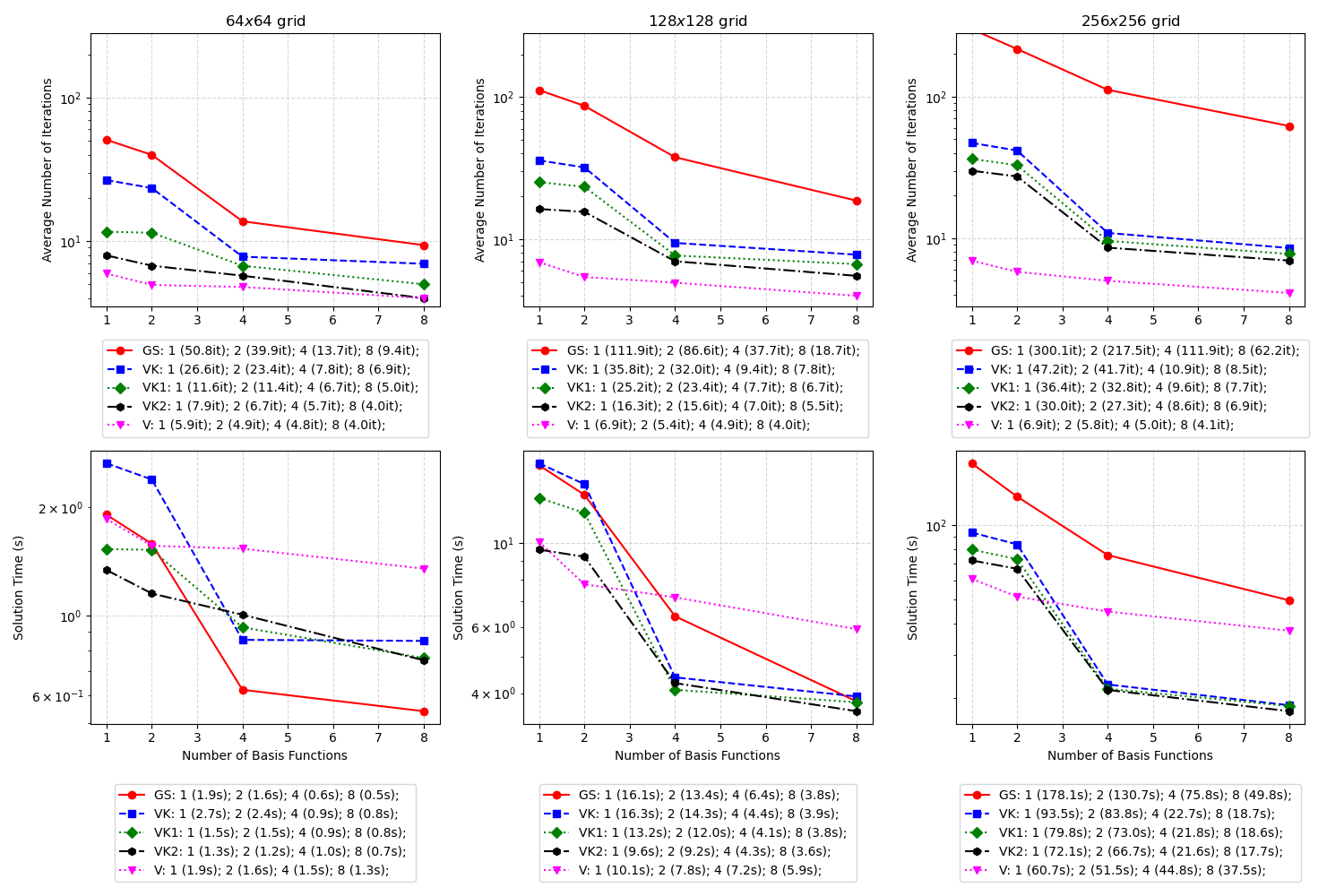}
\caption{Average number of iterations of linear solver (first row) and solve time (second row) for a multiscale two-grid solver with different types of smoothers for $64 \times 64$, $128 \times 128$ and $256 \times 256$ grids (from left to right). GS: Gauss-Seidel. V: $H/2$ overlapped subdomains $\omega_i$. VK, VK1 and VK2: minimal (interface), $h$ and $2h$ overlapped subdomains $K_i^o$ with $o=0,1,2$.}
\label{msiter1}
\end{figure}

\begin{figure}[h!]
\centering
\includegraphics[width=1\linewidth]{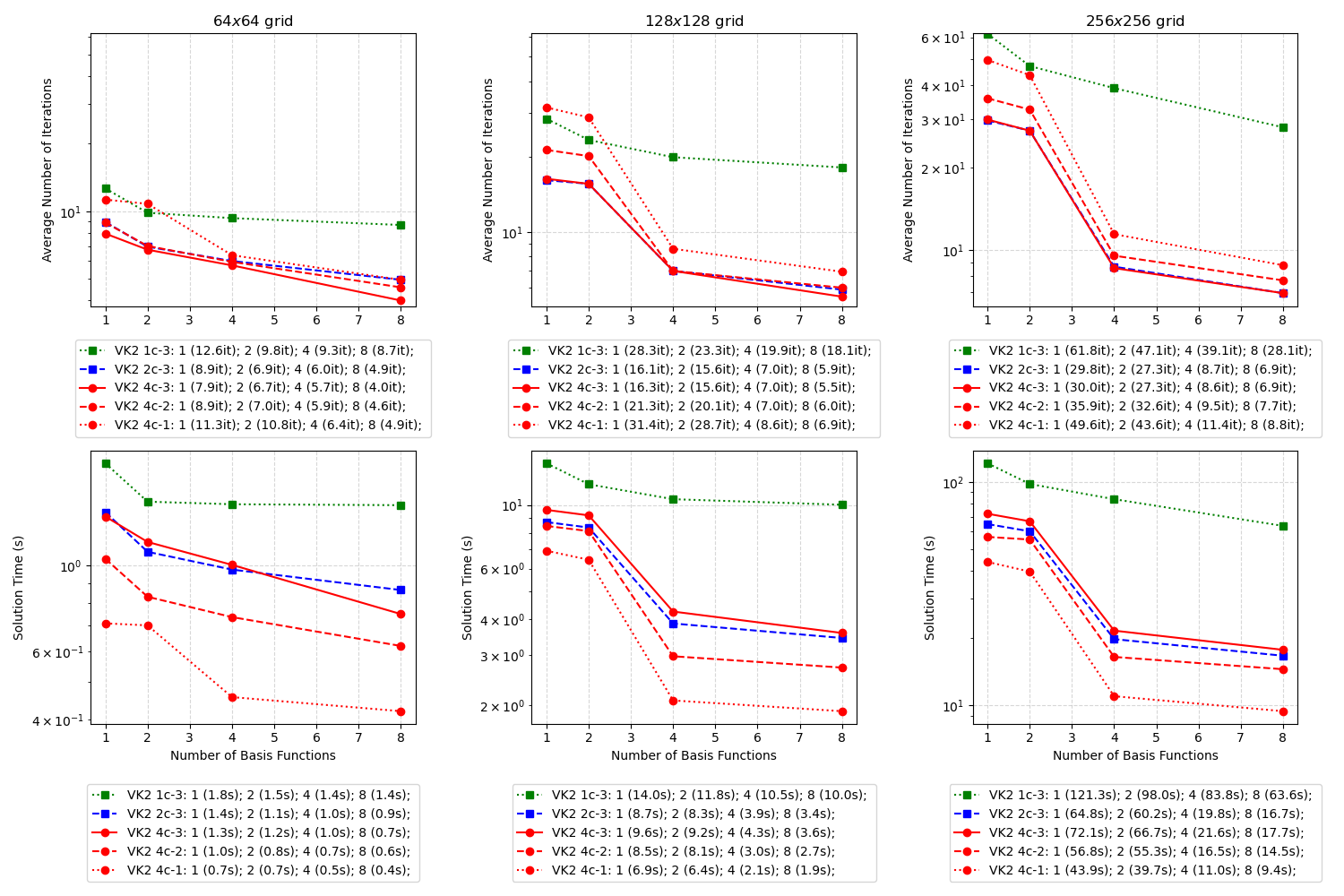}
\caption{Performance of coupled smoother (VK2) with different multicoloring strategies and number of smoothing iterations (log-scale). Three grids $64 \times 64$, $128 \times 128$ and  $256 \times 256$ (from left to right).  Top: average iteration counts of linear solver. Bottom: corresponding solve times.}
\label{msiter2}
\end{figure}

We present results with three smoothing iterations for all types of smoothers, and the proposed coupled smoothing iterations are performed using multiscale Vanka smoother with four colors and 3 post-smoothing iterations.  
In Figure \ref{msiter1}, we investigate the convergence and the corresponding solve time. We show an average number of iterations of linear solver and solve time (without ``offline'' time of generating system and initializing coarse solver) for a multiscale two-grid solver with different types of smoothers for the poroelasticity problem on three grids ($N =64, 128$ and $256$). 
We observe a clear dependence of the solution time on number of basis functions, where increasing basis function 1 to 8 almost always leads to improved wallclock times. When we precompute basis functions, the overhead cost of 8 vs. 1 is very small, so the best practice here is using more basis functions. All of the overlapping Vanka approaches reduce iteration counts by an order of magnitude or more on the finest grid compared with Gauss-Seidel, and demonstrate scalability in fine-mesh size, whereas Gauss-Seidel results in increased iteration count with mesh refinement. This is to some extent to be expected, because the size of subdomains increases in size relative to fine-scale mesh with mesh refinement, but when the subdomain inverses can  be precomputed and used across all times, such an approach does not raise significant scalability issues. In almost all cases, coarse cell-based Vanka with overlap of one or two cells is the best in terms of wallclock time. Using Vanka over the full subdomains $\{\omega_i\}$ decreases iterations vs. Vanka based on coarse cells, but is working too hard in the sense that it increases wallclock time compared with cell-based Vanka.

In Figure \ref{msiter2}, we present a visual comparative performance study of the VK2 smoother across different multicoloring strategies and coarse grid resolutions.  The first row represents the average number of linear iterations versus the number of multiscale basis functions for three grids (from left to right). The corresponding solve times (in seconds) are in the second row. 
Across all grids, the number of iterations decreases significantly as the number of multiscale basis functions $M$ increases due to the better accuracy of the coarse space. We see that for $M = 1$, all smoother configurations require more than 10 iterations. When we take a larger $M \geq 4$, most smoother configurations converge in about 3–4 iterations, indicating that the solver becomes increasingly robust as the coarse space captures more global solution behavior. Among smoother variants, the 4-color smoother configurations consistently converge with 2-4 iterations. 
Varying the number of smoothing iterations from 1 to 3 improves convergence speed but increases solve time.
We observe that multicoloring is essential in the proposed approach, where 4-color smoothers consistently reduce both the number of iterations and wall-lock time.

\subsection{Test 2}

We consider both two- and three-dimensional formulations on the computational domain $\Omega = [0,L]^2$ and $[0,L]^3$ with $L = 1~\mathrm{m}$. 
We investigate the performance of the proposed two-grid method under varying levels of heterogeneity, different model parameters and boundary conditions, through the following numerical tests:
\begin{itemize}
\item \textit{Test 2a (model parameters).}
We study the influence of heterogeneity, contrast, nonlinearity, and boundary conditions on the performance of the proposed two-grid method for a two-dimensional problem using a $128 \times 128$ fine grid ($N_h=49,923$) with $N_t = 20$ time steps (see Figures~\ref{results2a}, \ref{results2b}, and~\ref{results2c}).

\item \textit{Test 2b (computational cost).}
We analyze both the offline (precomputation) and online computational costs of the proposed two-grid method for two- and three-dimensional problems (see Figures~\ref{results2a} and~\ref{results3}). In addition, we consider a coarse space constructed using standard linear basis functions only (i.e., without additional spectral basis functions).  We consider $128 \times 128$ and $256 \times 256$ two-dimensional problems in \textit{Test 2a} and three-dimensional problem on $32 \times 32 \times 32$ grid in \textit{Test 2b}. 
The corresponding total numbers of degrees of freedom are $\mathrm{DOF}_h = 3 (N+1)^2 = 49,923$, $198,147$, and $143,748$, respectively.
\end{itemize}

\begin{figure}[h!]
\centering
\includegraphics[width=1\linewidth]{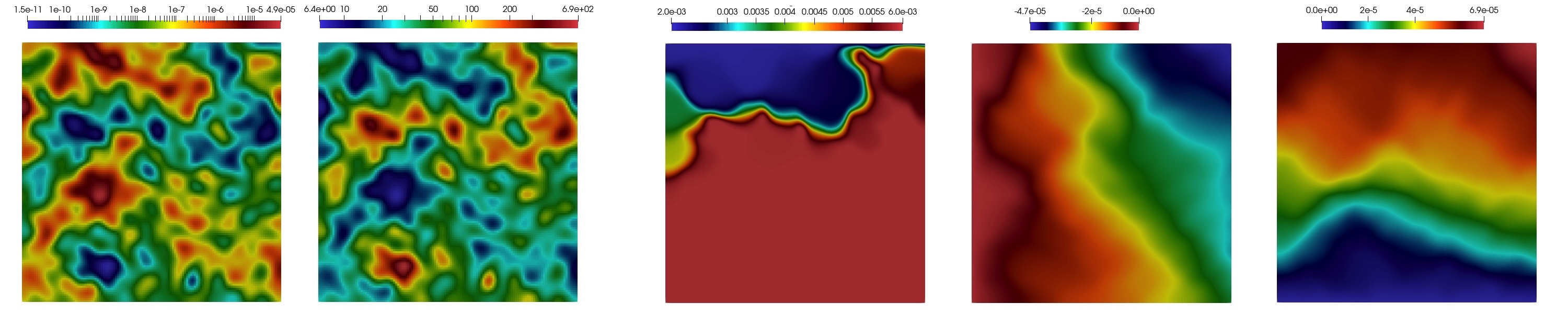}
\caption{Test 2a. Heterogeneous coefficients $k_s(x)$[m$^2$] and $E_d(x)$[MPa], pressure [MPa] and displacements $x$ and $y$  components [m] at final time (from left to right).} 
\label{results2a}
\end{figure}

\begin{figure}[h!]
\centering
\includegraphics[width=1\linewidth]{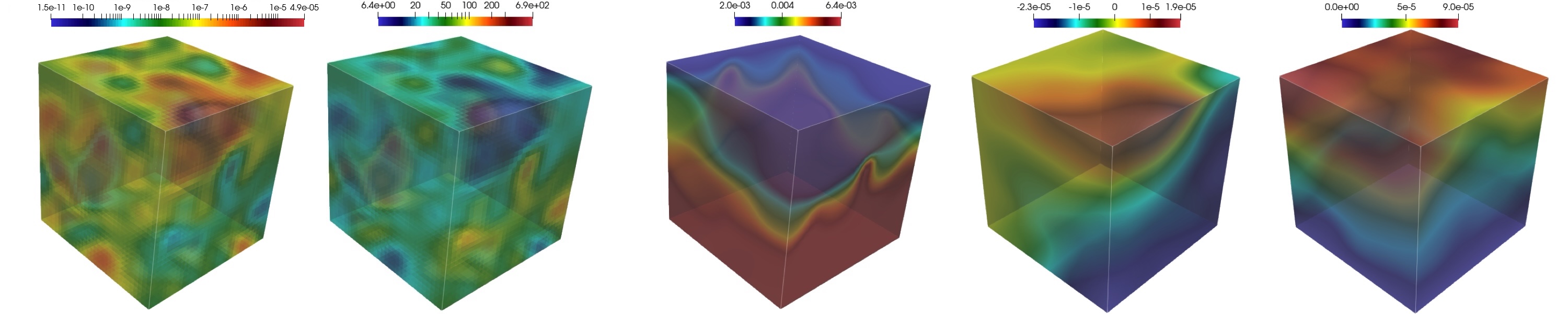}
\caption{Test 2b (3d). Heterogeneous coefficients $k_s(x)$[m$^2$] and $E_d(x)$[MPa], pressure [MPa] and displacements $x$ and $y$  components [m] at final time (from left to right). } 
\label{results3}
\end{figure}

The model parameters are specified as follows:
\begin{itemize}
\item The van Genuchten model \eqref{eq:vg1}--\eqref{eq:vg2} is used to describe the hydraulic nonlinearity, with parameters 
$\nu = 0.37$, $\theta_r = 0.03$, $\theta_s = 0.45$, $\beta = 1$ [1/m], and $n_{\theta} = 1.6$, where $m_{\theta} = 1 - 1/n_{\theta}$.
These values correspond to silt-type soils \cite{siltecho2015use, lu2014power}. For the nonlinear elastic model \eqref{eq:vg3}, we set $r_E = E_d / E_w = 2$ and $\zeta = 1.5$ \cite{lu2014power}.

\item The porosity is set to $\phi = 0.45$, the Biot coefficient is set to $\alpha = 0.2$ and the compressibilities of water and solid grains are taken as $C_w = C_s = 10^{-3}$ [1/MPa]. The water density is $\rho_w = 1000$ [kg/m$^3$], and the gravitational acceleration is $g = 9.8$ [m/s$^2$].

\item For the flow problem, a Robin-type boundary condition is imposed on the top boundary with 
$p_1 = 0.0020286$ [MPa] and $\gamma = 100$, while no-flux conditions are applied on the remaining boundaries.  
For the mechanical problem, zero displacement is enforced in the $x$-direction on the left boundary and in the $y$-direction on the bottom boundary, whereas the remaining boundaries are assumed to be stress-free.

\item The initial pressure is set to $p_0 = 0.006027$ [MPa]. Simulations are performed over the time interval $t \in [0, T_{\max}]$ with $T_{\max} = 2.31$ days. 
\end{itemize}
In Figures~\ref{results2a} and~\ref{results3}, we depict the solutions for \textit{Test 2a} and \textit{Test 2b} using the parameters described above.

\subsubsection{Test 2a (model parameters variation)}

In \textit{Test 2a}, we examine the effects of heterogeneity, contrast, nonlinear parameters, and boundary conditions on the performance of the method using a fixed $128 \times 128$ grid.  
Taking \textit{Test 2a} as a baseline, Figure~\ref{results2b} presents two additional tests.

\begin{figure}[h!]
\centering
\begin{minipage}{0.49\linewidth}
\centering
\includegraphics[width=\linewidth]{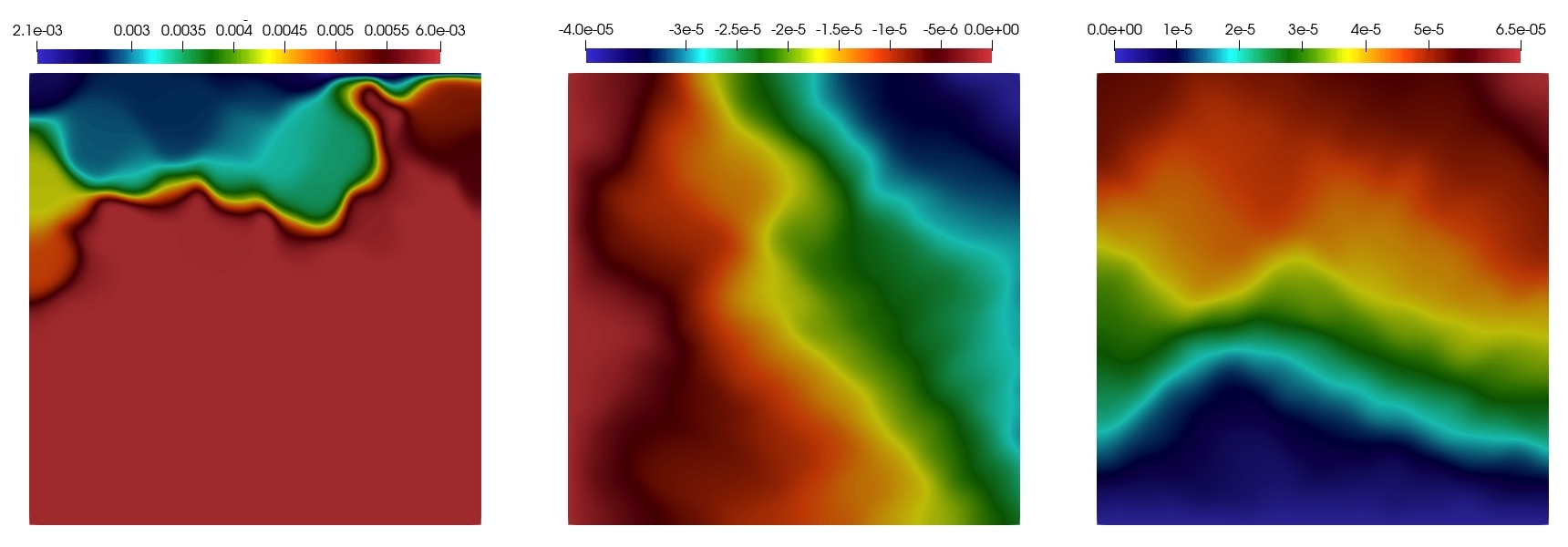}\\
\text{\textit{\footnotesize Test 2a-bc (boundary conditions).}}
\end{minipage}
\hfill
\begin{minipage}{0.49\linewidth}
\centering
\includegraphics[width=\linewidth]{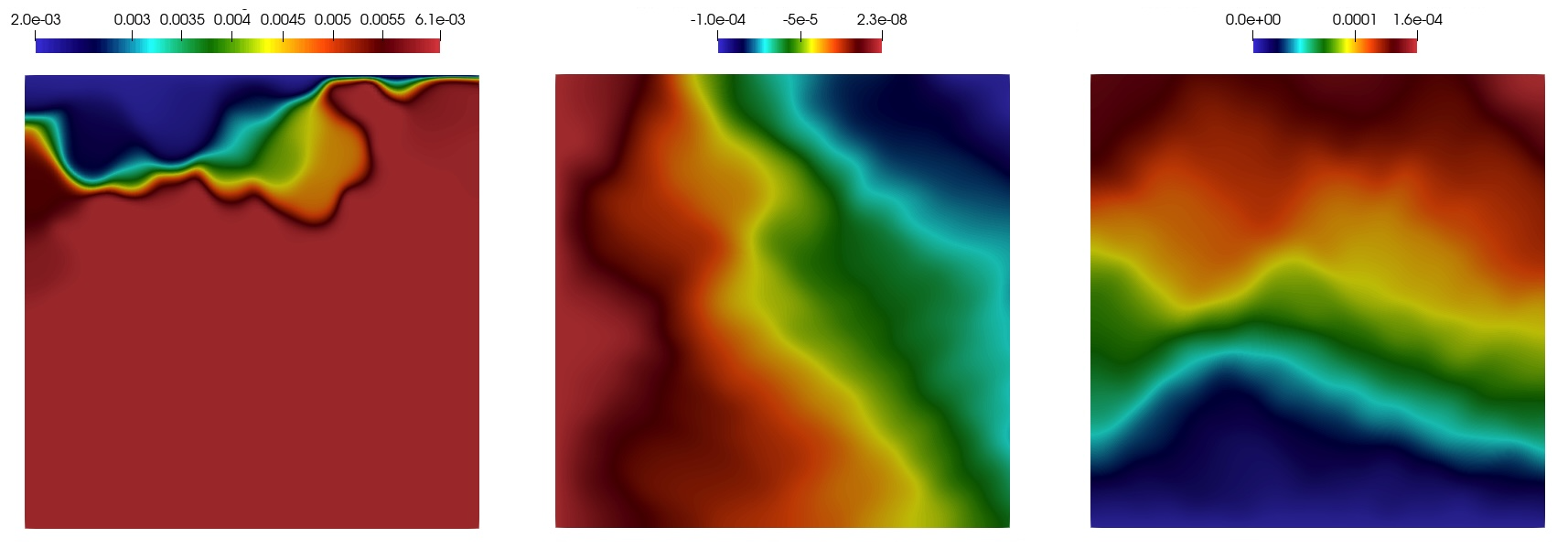}\\
\text{\textit{\footnotesize Test 2a-nl (nonlinear model parameters).}}
\end{minipage}
\caption{Pressure [MPa] and displacement components $u_x$ and $u_y$~[m] at the final time
(from left to right).}
\label{results2b}
\end{figure}

\begin{figure}[h!]
\centering
\includegraphics[width=1\linewidth]{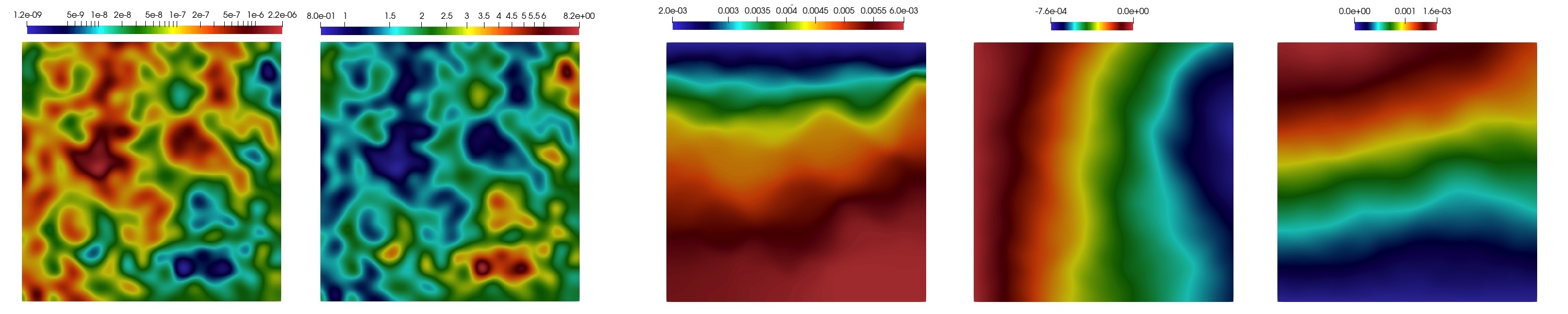}
\caption{Test 2a-c (contrast).  Heterogeneous coefficients $k_s(x)$[m$^2$] and $E_d(x)$[MPa], pressure [MPa] and displacements $x$ and $y$  components [m] at final time (from left to right). } 
\label{results2c}
\end{figure}

\begin{figure}[h!]
\centering
\includegraphics[width=0.24\linewidth]{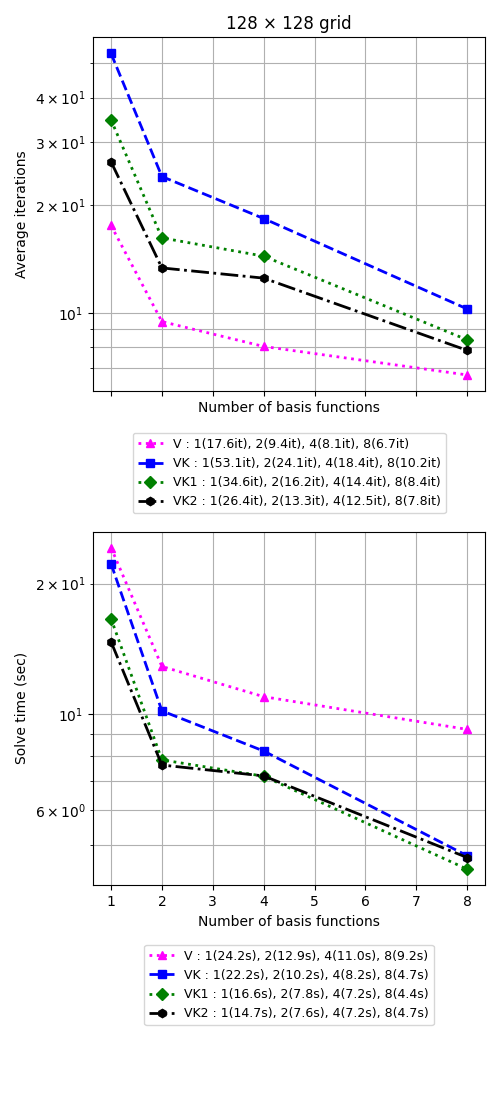}
\includegraphics[width=0.24\linewidth]{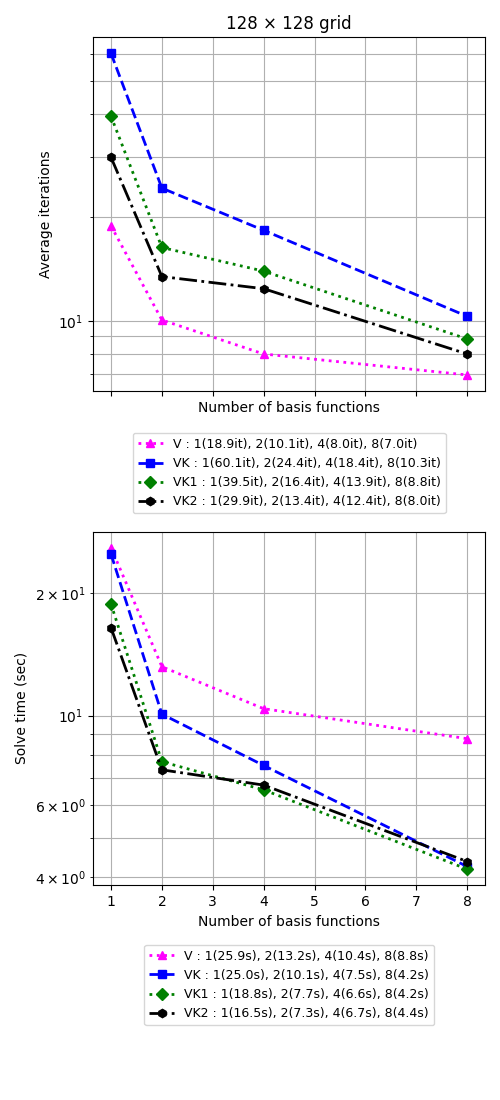}
\includegraphics[width=0.24\linewidth]{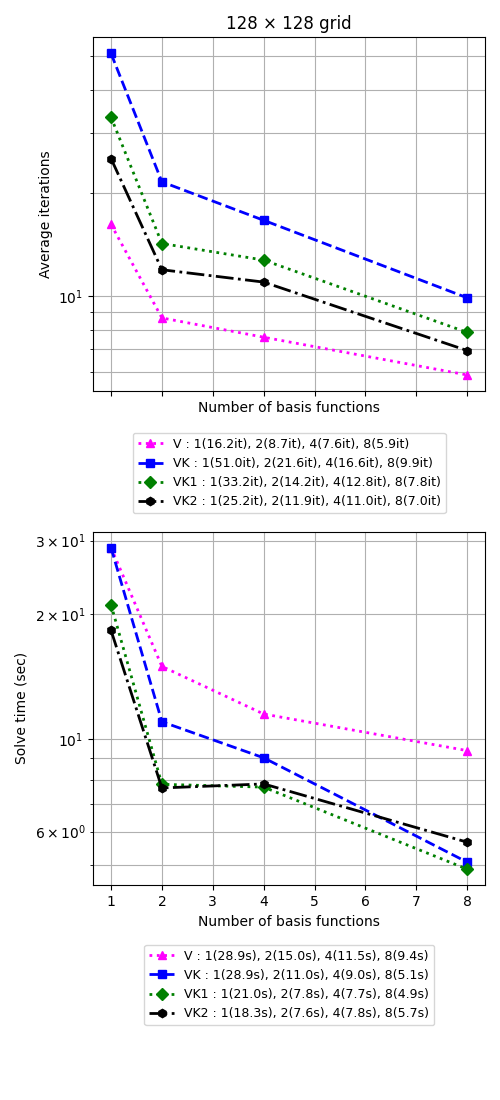}
\includegraphics[width=0.24\linewidth]{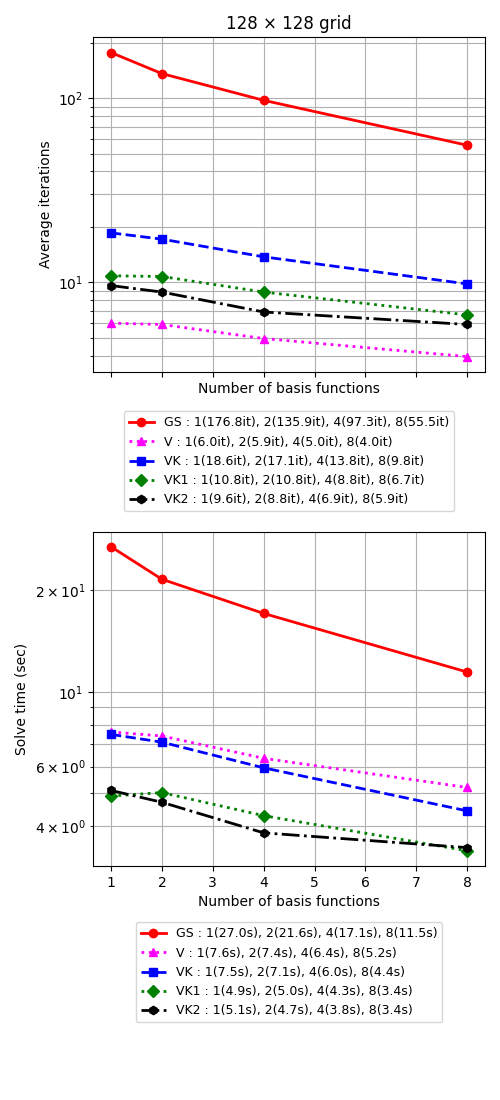}
\caption{Tests 2a, 2a-bc, 2a-nl and 2a-c (from left to right). Average number of iterations of linear solver (first row) and solve time (second row) for a multiscale two-grid solver with different types of smoothers for  $128 \times 128$ grids.  (from left to right). GS: Gauss-Seidel. V: $H/2$ overlapped subdomains $\omega_i$. VK, VK1 and VK2: minimal (interface), $h$ and $2h$ overlapped subdomains $K_i^o$ with $o=0,1,2$.}
\label{msiter2b}
\end{figure}

In \textit{Test 2a-bc}, we reduce the boundary coefficient to $\gamma = 0.1$ to investigate the influence of boundary conditions by varying the parameter associated with the Robin-type boundary condition. The default value $\gamma = 100$ in \eqref{eq:mm5} corresponds to Dirichlet-type conditions, whereas the smaller value $\gamma = 0.1$ represents flux-related (Neumann-type) boundary conditions.  
In \textit{Test 2a-nl}, we study the influence of nonlinear parameters on the solution and the performance of the two-grid method. The default setting corresponds to silt-type rocks, while in this additional test we consider a nonlinear model corresponding to sand. In the van Genuchten model \eqref{eq:vg1}--\eqref{eq:vg2}, the parameters are set as $\nu = 0.5$, $\theta_r = 0.018$, $\theta_s = 0.39$, $\beta = 0.035$ [1/m], and $n_{\theta} = 2.52$, with $m_{\theta} = 1 - 1/n_{\theta}$ \cite{siltecho2015use, lu2014power}. For the nonlinear elastic model \eqref{eq:vg3}, we set $r_E = E_d / E_w = 2$ and $\zeta = 0.2$ \cite{lu2014power}. Additionally, the parameters $\alpha = 0.8$ and $\phi = 0.39$ are used.
Furthermore, we investigate the effect of contrast on the convergence of the method in \textit{Test 2a-c} (see Figure~\ref{results2c}). 

Figure~\ref{msiter2b} summarizes the robustness and efficiency of the proposed multiscale two-grid solver under variations in boundary conditions, nonlinear material parameters, and coefficient contrast. 
Across all variants of \textit{Test~2a}, the iteration  counts remain uniformly bounded, demonstrating that the two-grid method is robust with respect to changes in boundary conditions (\textit{Test~2a-bc}) as well as increased nonlinearities associated with sand-type parameters (\textit{Test~2a-nl}). 
In particular, the coupled overlapping Schwarz/Vanka-type smoothers with enlarged overlap (VK1 and VK2) consistently achieve the smallest number of iterations. Although these smoothers incur a higher per-iteration cost, the total solve time remains competitive and is often reduced due to the significantly lower iteration counts. 
In \textit{Test~2a-c}, decreasing the coefficient contrast leads to a reduction in the number of iterations, and solvers with colored oversampled smoothers preserve fast convergence. Moreover, for this lower-contrast setting, convergence is also observed for the simple pointwise smoother (GS). Overall, the results confirm that the proposed two-grid framework exhibits robust performance with respect to both coefficient contrast and nonlinear effects, achieving a favorable balance between iteration counts and wall-clock time when appropriate overlapping smoothers are employed.

\subsubsection{Test 2b (computational cost)}

Finally, we compare the performance of the proposed two-grid method for both two- and three-dimensional problems (see Figures~\ref{results2a} and~\ref{results3}). In addition, we consider a coarse space constructed using standard linear basis functions.  Specifically, we examine two-dimensional problems on $128 \times 128$ and $256 \times 256$ grids in \textit{Test~2a}, as well as a three-dimensional problem on a $32 \times 32 \times 32$ grid in \textit{Test~2b}.

\begin{figure}[h!]
\centering
\includegraphics[width=0.32\linewidth]{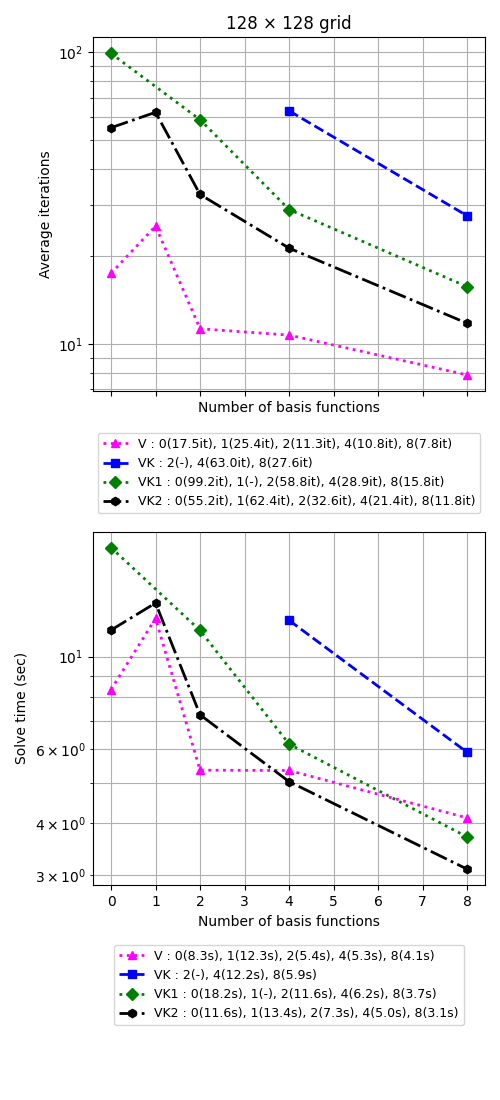}
\includegraphics[width=0.32\linewidth]{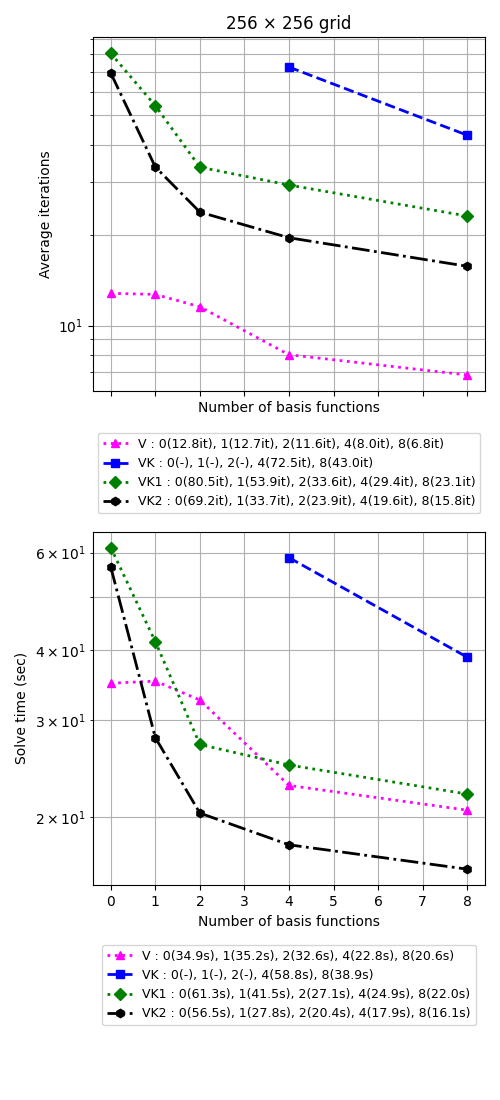}
\includegraphics[width=0.32\linewidth]{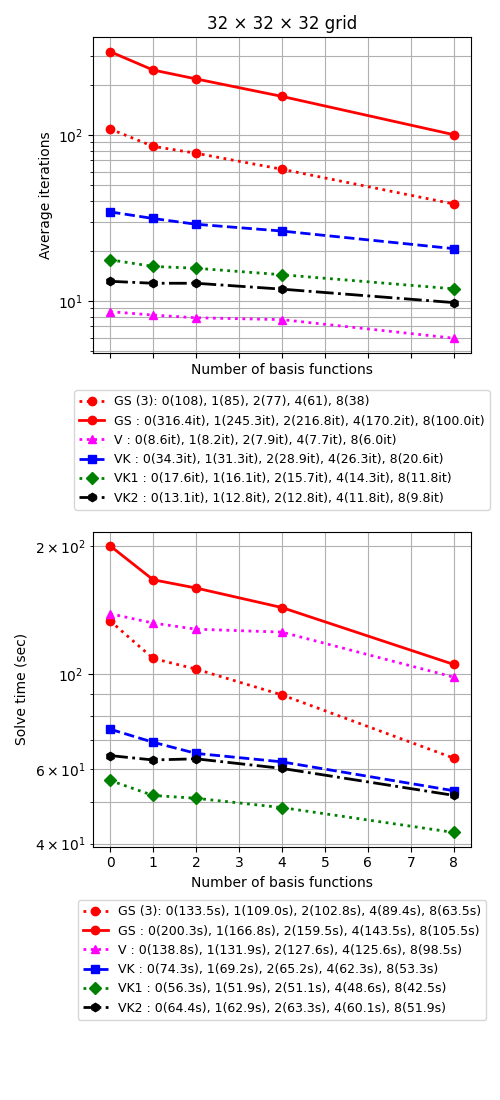}
\caption{Tests 2a on $128^2$ grid, 2a on $256^2$ grid,  and 2b on $32^3$ grid (from left to right). Average number of iterations of linear solver (first row) and solve time (second row) for a multiscale two-grid solver with different types of smoothers and 1 post-smoothing iteration (from left to right). GS: Gauss-Seidel. V: $H/2$ overlapped subdomains $\omega_i$. VK, VK1 and VK2: minimal (interface), $h$ and $2h$ overlapped subdomains $K_i^o$ with $o=0,1,2$. GS(3): Gauss-Seidel with 3 post-smoothing iterations. }
\label{msiter3}
\end{figure}

\begin{table}[h!]
\caption{Tests 2a on $128^2$ grid, 2a on $256^2$ grid, and 2b on $32^3$ grid (from left to right). 
Computational time of the offline stage (eight eigenvectors per local domain).  
$tm^{av}_{loc}$, $DOF^{av}_{loc}$, and $tm^{tot}_{off}$ denote the average 
time (seconds) for solving the local eigenvalue problem (basis construction), the 
average size of the local problem, and the total time calculated as the 
sum over all local domains. }
\label{tab:time-off}
\centering  
\setlength{\tabcolsep}{4pt}
{\footnotesize 	 	
\begin{tabular}{c|ccc|ccc|ccc}
& \multicolumn{3}{c|}{$128 \times 128$} 
& \multicolumn{3}{|c|}{$256 \times 256$}
& \multicolumn{3}{|c}{$32 \times 32 \times 32$} 
\\
 & $tm^{av}_{loc}$ & $DOF^{av}_{loc}$ & $tm^{tot}_{off}$
 & $tm^{av}_{loc}$ & $DOF^{av}_{loc}$ & $tm^{tot}_{off}$
 & $tm^{av}_{loc}$ & $DOF^{av}_{loc}$ & $tm^{tot}_{off}$
 \\
\hline
$p$ 
& 0.045 & 866.9 & 3.633
& 0.259 & 3351.12 & 21.024
& 0.507 & 2628.07 & 63.430 \\
$u$ 
& 0.230 & 1733.9 & 18.600
& 1.936 & 6702.25 & 156.887
& 9.307 & 7884.22 & 1163.38\\
\end{tabular}
}
\end{table}

\begin{table}[h!]
\caption{Tests 2a on $128^2$ grid, 2a on $256^2$ grid,  and 2b on $32^3$ grid (from left to right). Computational time and average size of local problem ($\bar{it}$): $tm_{in}$, $tm_{sys}$, and $tm_{sol}$ denote the cumulative wall-clock times (seconds) for initialization, linear system generation, and linear system solution. The symbol ``--'' indicates that the method did not converge.} 
\label{tab:time-on}
\centering   
\setlength{\tabcolsep}{4pt}
{\footnotesize 	
\begin{tabular}{cc|cccc|cccc|cccc}
& 
& \multicolumn{4}{c|}{$128 \times 128$} 
& \multicolumn{4}{|c|}{$256 \times 256$}
& \multicolumn{4}{|c}{$32 \times 32 \times 32$} 
\\
 &  
 & $tm_{in}$ & $tm_{sys}$ & $tm_{sol}$ & $\bar{it}$ 
 & $tm_{in}$ & $tm_{sys}$ & $tm_{sol}$ & $\bar{it}$ 
 & $tm_{in}$ & $tm_{sys}$ & $tm_{sol}$ & $\bar{it}$  \\
\hline
\multicolumn{14}{c}{sIm}\\
\hline
\multicolumn{2}{c|}{biCG+mg}
& - & 18.38 & 54.07 & 213.0 
& - & 69.15 & 399.87 & 419.0 
& - & 2650.7 & 191.97 & 86.0 \\
\hline
\multicolumn{14}{c}{ImEx}\\
\hline
\multicolumn{2}{c|}{biCG+mg}
& 0.20 & 9.63 & 50.93 & 214.0 
& 0.74 & 41.43 & 404.55 & 415.0 
& 2.41 & 749.22  & 184.00 & 83.0 \\
\hline
\multicolumn{14}{c}{MsImEx, M=0}\\
\hline
GS & 1 & - & - & - & - 
&  - & - & - & - 
&2.36 & 713.83 & 200.29 & 316.4 \\
 & 2 & - & - & - & -
&  - & - & - & - 
&2.39 & 722.16 & 149.37 & 160.2 \\
 & 3 & - & - & - & - 
&  - & - & - & - 
&2.36 & 721.87 & 133.49 & 108.3 \\
\hline
{V} & 1 & 0.81 & 11.29 & 8.34 & 17.5 
&5.80 & 43.24 & 34.90 & 12.8 
&154.05 & 723.80 & 138.76 & 8.6 \\
& 2 & 0.81 & 11.37 & 14.41 & 15.6 
&5.55 & 43.36 & 63.85 & 11.8 
&154.64 & 722.26 & 215.59 & 6.9 \\
& 3 & 0.67 & 11.48 & 20.06 & 14.6 
&5.63 & 43.61 & 92.99 & 11.3 
&154.38 & 722.70 & 318.63 & 6.8 \\
\hline
{VK} & 1 & - & - & - & - 
& - & - & - & - 
&6.20 & 722.53 & 74.30 & 34.3 \\
& 2 & 0.32 & 11.39 & 18.09 & 62.8 
&1.35 & 43.23 & 100.62 & 78.9 
&6.12 & 722.17 & 78.04 & 19.6 \\
& 3 & 0.31 & 11.29 & 21.65 & 53.0 
&1.30 & 43.69 & 119.75 & 63.7 
&6.18 & 723.48 & 85.83 & 14.8 \\
\hline
{VK1} & 1 & 0.36 & 11.46 & 18.23 & 99.2 
&1.26 & 43.23 & 61.27 & 80.5 
&11.97 & 723.15 & 56.31 & 17.6 \\
& 2 & 0.36 & 11.32 & 16.61 & 50.4 
&1.44 & 43.38 & 84.31 & 60.2 
&11.81 & 723.16 & 71.11 & 11.8 \\
& 3 & 0.35 & 11.49 & 21.60 & 45.6 
&1.44 & 43.23 & 109.85 & 54.0 
&11.87 & 722.89 & 95.78 & 10.8 \\
\hline
{VK2} & 1 & 0.40 & 11.42 & 11.58 & 55.2 
&1.63 & 43.02 & 56.51 & 69.2 
&25.37 & 723.06 & 64.42 & 13.1 \\
& 2 & 0.40 & 11.31 & 17.24 & 45.2 
&1.59 & 43.57 & 84.13 & 55.0 
&25.41 & 722.63 & 93.48 & 9.8 \\
& 3 & 0.41 & 11.51 & 22.09 & 39.8 
&1.56 & 43.09 & 111.49 & 49.6 
&25.25 & 723.34 & 124.75 & 8.9 \\
\hline
\multicolumn{14}{c}{MsImEx, M=8}\\
\hline
{GS} & 1 & - & - & - & - 
&- & - & - & - 
&-     & -     & -     & - \\
& 2 & - & - & - & -
&- & - & - & - 
&6.33  & 719.73 & 72.72 & 53.4 \\
& 3 & - & - & - & -
&- & - & - & - 
&6.30  & 714.10 & 63.54 & 38.3 \\
\hline
{V} & 1 & 1.11 & 11.00 & 4.12 & 7.8 
&6.90 & 44.03 & 20.61 & 6.8 
&156.57 & 714.33 & 98.46  & 6.0 \\
& 2 & 1.08 & 10.98 & 6.54 & 6.8 
&6.74 & 43.45 & 32.65 & 5.9 
&155.94 & 714.33 & 180.54 & 5.7 \\
& 3 & 1.07 & 11.03 & 9.22 & 6.7 
&6.55 & 43.23 & 47.93 & 5.8 
&155.94 & 714.70 & 231.91 & 5.0 \\
\hline
{VK} & 1 & 0.58 & 10.91 & 5.91 & 27.6 
&2.21 & 43.13 & 38.89 & 43.0 
&10.17 & 714.12 & 53.26 & 20.6 \\
& 2 & 0.55 & 11.07 & 4.50 & 13.2 
&2.31 & 42.99 & 34.69 & 23.9 
&10.05 & 714.49 & 54.05 & 12.2 \\
& 3 & 0.57 & 10.98 & 4.71 & 10.2 
&2.23 & 43.36 & 33.32 & 16.9 
&10.04 & 715.05 & 60.99 & 9.8 \\
\hline
{VK1} & 1 & 0.61 & 10.91 & 3.71 & 15.8 
&2.38 & 42.63 & 22.04 & 23.1 
&15.96 & 714.49 & 42.51 & 11.8 \\
& 2 & 0.60 & 10.97 & 3.54 & 9.4 
&2.31 & 42.54 & 19.93 & 12.9 
&15.78 & 714.64 & 56.95 & 8.8 \\
& 3 & 0.61 & 11.01 & 4.39 & 8.4 
&2.38 & 43.22 & 21.45 & 9.8 
&15.77 & 714.29 & 73.16 & 7.9 \\
\hline
{VK2} & 1 & 0.65 & 11.01 & 3.10 & 11.8 
&2.44 & 42.60 & 16.14 & 15.8 
&29.09 & 713.79 & 51.95 & 9.8 \\
& 2 & 0.63 & 11.08 & 3.62 & 8.4 
&2.41 & 42.02 & 15.40 & 9.2 
&29.10 & 714.44 & 76.10 & 7.8 \\
& 3 & 0.66 & 11.04 & 4.67 & 7.8 
&2.46 & 43.00 & 18.47 & 7.8 
&29.02 & 714.36 & 98.36 & 6.8 \\
\end{tabular}
}
\end{table}

Figure~\ref{msiter3} compares the performance of the proposed multiscale two-grid solver for both two- and three-dimensional problems. The results indicate that simple pointwise Gauss–Seidel smoothing is insufficient for robust convergence in the two-dimensional case and is effective only for relatively small three-dimensional problems. In contrast, overlapping Schwarz and Vanka-type smoothers significantly enhance robustness across all considered test cases. In particular, the multiscale Vanka smoothers with enlarged overlap (VK1 and VK2) consistently achieve the smallest iteration counts across all tested configurations, leading to a reduction in overall solve time due to faster convergence. The advantages of increased overlap become more pronounced in three dimensions, where minimal or pointwise smoothers result in a substantial growth in iteration counts, whereas oversampled multiscale smoothers maintain stable and efficient convergence. These results demonstrate that an appropriate choice of coupled overlapping smoothers is essential for achieving scalable performance of the proposed multiscale two-grid framework.

Tables~\ref{tab:time-off} and \ref{tab:time-on} report offline (precomputation) and online costs. The offline stage is dominated by solving the local generalized eigenvalue problems \eqref{eq:evp-p} and \eqref{eq:evp-u} for pressure and displacement; Table~\ref{tab:time-off} (eight eigenvectors per subdomain) lists the average basis-construction time ($tm^{av}{loc}$), average local problem size ($DOF^{av}{loc}$), and the total offline time summed over subdomains ($tm^{tot}_{off}$). Because the local eigenproblems are independent, these computations are naturally parallelizable.

Table~\ref{tab:time-on} compares online performance in 2D and 3D for standard linearized implicit (sIm), implicit–explicit (ImEx), and multiscale implicit–explicit (MsImEx) schemes with various solvers/smoothers. For sIm, Krylov solves (BiCGStab in scipy \cite{virtanen2020scipy}) with smoothed-aggregation AMG preconditioning (pyamg \cite{bell2023pyamg}) require many iterations and long solve times, reflecting the difficulty of monolithic AMG solvers and resolving nonlinear coupling. ImEx reduces per-step time by fixing the matrix and no longer having to resolve the nonlinearity, but Krylov iteration counts remain similar. In contrast, MsImEx substantially lowers iteration counts and solve times by using an enriched multiscale coarse space with overlapping Schwarz and Vanka-type smoothers. Although MsImEx adds one-time offline setup (multiscale bases and local block solvers), these costs are amortized across nonlinear time steps since the precomputed components are reused. We also report the non-enriched coarse space ($M=0$), as a loose proxy for geometric multigrid, which improves over sIm/ImEx but is less effective than spectral enrichment. Overall, once the offline stage is complete, MsImEx consistently converges in only a few iterations and delivers the lowest per-step solve times, especially for larger problems.

\section{Conclusion}\label{sec:con}

In this work, we consider a coupled nonlinear system of equations that describe an unsaturated flow problem in heterogeneous poroelastic media. We derive a standard implicit scheme for the coupled system and then propose an implicit-explicit formulation, in which the stiff components of the problem are represented by a single linear operator that is treated implicitly. Careful construction of the additive splitting approach is presented for the unsaturated poroelasticity problem with the van Genuchten nonlinearity model and power-low for elastic modulus. We show that the proposed implicit-explicit scheme is unconditionally stable in the linear stability sense and enables an efficient implementation. Next, based on the proposed linearization, we develop a multiscale two-grid solver with coupled overlapping Schwarz smoothing iterations. Accurate coarse grid approximation is constructed based on an enrichment of the linear coarse space with local spectral basis functions for both pressure and elasticity equations. A coupled smoothing iteration is proposed based on the overlapping Vanka approach and defining blocks relative to a coarse grid (oversampled coarse cell and local support of multiscale basis functions). Although the block smoothers present a significant computational cost, similar to the multiscale basis functions, by combining with the implicit-explicit formulation we are able to precompute these quantities once ``offline'' for a full nonlinear simulation. A numerical study is presented for nonlinear coupled test problem of unsaturated flow in heterogeneous poroelastic media.

\section*{Acknowledgments}
BSS was supported by the DOE Office of Advanced Scientific Computing Research Applied Mathematics program through Contract No. 89233218CNA000001. Los Alamos National Laboratory report number LA-UR-25-25055.

\bibliographystyle{unsrt}
\bibliography{lit}

\appendix

\section{Details on the mathematical model}\label{app1}

We have a mass conservation equation in terms of $(S, p)$ \cite{kim2010sequential,varela2018implementation, varela2021finite}
\begin{equation}
\label{eq:mconsr}
\begin{split}
\rho_w f & = \frac{\partial}{\partial t} (\rho_w \phi S) + \nabla \cdot (\rho_w \phi S q_w)\\
& \approx \phi S \frac{\partial \rho_w}{\partial t}
+ \rho_w S \frac{\partial \phi}{\partial t}
+ \phi \rho_w \frac{\partial S}{\partial t}
+
\rho_w \nabla \cdot (\phi S q_{ws}) 
+ \rho_w \phi S  \nabla \cdot q_s \\
& = \rho_w \Big( 
\phi S C_w \frac{\partial p}{\partial t}
+ \phi \frac{\partial S}{\partial t}
+ \nabla \cdot q_w  
\Big)
+ \rho_w S \Big( 
\frac{\partial \phi}{\partial t} + \phi \nabla \cdot q_s
\Big),
\end{split}
\end{equation}
where $C_w$ is the water compressibility, $q_{ws} = q_w - q_s$ is the velocity of the water with respect to the solids and $q_w = \phi S q_{ws}$ is the Darcy velocity of the water phase, 
\[
q_w = - k(x) \frac{k_{rw}(S)}{\mu_w} \nabla(p - \rho_w g),
\]
Here $k$ is the intrinsic permeability tensor, $\mu_w$ is the water dynamic viscosity, and $k_{rw}$ is the relative permeability that accounts for the simultaneous flow of water and air. In hydrology, Darcy’s law is often expressed in terms of the pressure, where $h_w = (p - p_a)/(\rho_w g)$ is the water pressure head (relative to the atmospheric pressure $p_a$) and $k_w= \rho_w g \ k/\mu_w$ is the hydraulic conductivity at saturated conditions. Under Richards' assumption ($p_a=0$), we obtain eq. \eqref{eq:unsat}.

For the conservation of the solid phase, we have \cite{varela2018implementation}
\[
\begin{split}
0 &= \frac{\partial}{\partial t} \big((1-\phi) \rho_s\big) 
+ \nabla \cdot \big((1-\phi) \rho_s q_s\big) 
\approx 
(1-\phi) \frac{\partial \rho_s}{\partial t} 
- \rho_s \frac{\partial \phi}{\partial t} 
+ (1 - \phi) \rho_s \nabla \cdot q_s \\
& = 
- \rho_s \Big( \frac{\partial \phi}{\partial t}  + \phi  \nabla \cdot q_s \Big)
+ (1-\phi) \Big( 
    \frac{\partial \rho_s}{\partial \sigma_T} \frac{\partial \sigma_T}{\partial t} 
  + \frac{\partial \rho_s}{\partial p_s} \frac{\partial p_s}{\partial t}
\Big)
+  \rho_s \nabla \cdot q_s.
\end{split}
\]
Here, we have
\[
\frac{\partial \rho_s}{\partial \sigma_T} = -\frac{\rho_s C_s}{1 - \phi}, \quad 
\frac{\partial \rho_s}{\partial p_s} = -\frac{\rho_s C_s \phi}{1 - \phi}, \quad 
\nabla \cdot q_s = \frac{\partial \varepsilon_v}{\partial t}, \quad 
\frac{\partial \sigma_T}{\partial t} 
= \frac{1}{C_m} \nabla \cdot q_s 
- \alpha p \frac{\partial S}{\partial t} 
- \alpha S \frac{\partial p}{\partial t}, 
\]
with $\varepsilon_v = \nabla \cdot u$, $p_s = Sp$ and $\alpha = 1 - C_s/C_m$, where $C_s$ and $C_m$ are the compressibility of the solid grains and porous medium.  
Therefore, we obtain
\begin{equation}
\label{eq:sconsr}
\begin{split}
\frac{\partial \phi}{\partial t}  + \phi  \nabla \cdot q_s 
& = 
- \rho_s \Big( \frac{\partial \phi}{\partial t}  + \phi  \nabla \cdot q_s \Big)
+ (1-\phi) \Big( 
    \frac{\partial \rho_s}{\partial \sigma_T} \frac{\partial \sigma_T}{\partial t} 
  + \frac{\partial \rho_s}{\partial p_s} \frac{\partial p_s}{\partial t}
\Big)
+  \rho_s \nabla \cdot q_s\\
& = 
- C_s \Big( 
   \frac{\partial \sigma_T}{\partial t} 
  + \phi \frac{\partial p_s}{\partial t}
\Big)
+   \frac{\partial \varepsilon_v}{\partial t} \\
& = - C_s \Big( 
   \frac{1}{C_m} \frac{\partial \varepsilon_v}{\partial t}
- \alpha p \frac{\partial S}{\partial t} 
- \alpha S \frac{\partial p}{\partial t}
+ \phi p \frac{\partial S}{\partial t} 
+ \phi S \frac{\partial p}{\partial t}
\Big)
+  \frac{\partial \varepsilon_v}{\partial t}\\
& = \alpha \frac{\partial \varepsilon_v}{\partial t} 
+ C_s \Big( 
(\alpha - \phi) p \frac{\partial S}{\partial t} 
+ (\alpha - \phi) S \frac{\partial p}{\partial t}
\Big)
\end{split}
\end{equation}
By combining \eqref{eq:mconsr} and \eqref{eq:sconsr}, we obtain
\[
\begin{split}
f &= \phi S C_w \frac{\partial p}{\partial t}
+ \phi \frac{\partial S}{\partial t}
+ \nabla \cdot q_w  
+ S \Big( 
\frac{\partial \phi}{\partial t} + \phi \nabla \cdot q_s
\Big) = \\
& = 
\phi S C_w \frac{\partial p}{\partial t}
+ \phi \frac{\partial S}{\partial t}
+ \nabla \cdot q_w  
+ \alpha S \frac{\partial \varepsilon_v}{\partial t} 
+ S C_s \Big( 
(\alpha - \phi) p \frac{\partial S}{\partial t} 
+ (\alpha - \phi) S \frac{\partial p}{\partial t}
\Big) \\ 
& = 
\big( \phi C_w + (\alpha -\phi) C_s S \big) S \frac{\partial p}{\partial t}
+ \big(\phi + (\alpha -\phi) C_s  S p \big) \frac{\partial S}{\partial t}
+ \alpha S \frac{\partial \varepsilon_v}{\partial t} 
+ \nabla \cdot q_w.
\end{split}
\]
Then, we have
\[
c(x, p) \frac{\partial p}{\partial t}
+ \alpha S \frac{\partial \varepsilon_v}{\partial t} 
- \nabla \cdot \Big( \kappa(x, p) \nabla(p - \rho_w g) \Big) = f,
\]
with \[
c(x, p) =  \big( \phi C_w  + (\alpha -\phi) C_s S \big) S + \big(\phi + (\alpha -\phi) C_s  S p \big) S', \quad 
\kappa(x, p) = k(x) \frac{k_{rw}(S)}{\mu_w}, 
\]
where $S = S(p)$ and $S' = \partial S/\partial p$.


\end{document}